\newtheorem{thm}{Theorem}[section]
\newtheorem{cor}[thm]{Corollary}
\newtheorem{lem}[thm]{Lemma}
\newtheorem{defn}[thm]{Definition}
\theoremstyle{remark}
\newcommand{\eps}{\varepsilon}
\newcommand{\Qss}{Q^{\ast\ast}}
\newcommand{\Ta}{T_\alpha}
\newcommand{\Ts}{T^\ast}
\newcommand{\Tas}{\Ts_\alpha}
\newcommand{\xs}{x^\ast}
\newcommand{\Xs}{X^\ast}
\newcommand{\yi}{y_i}
\newcommand{\ys}{y^\ast}
\newcommand{\Ys}{{Y^\ast}}
\newcommand{\Yss}{Y^{\ast\ast}}
\newcommand{\Ysss}{Y^{\ast\ast\ast}}
\newcommand{\fwsxsy}{\mathcal F_{w^\ast}(\Xs,Y)}
\newcommand{\fwsxsyss}{\mathcal F_{w^\ast}(\Xs,\Yss)}
\newcommand{\ixys}{\mathcal I(X,\Ys)}
\newcommand{\ixysss}{\mathcal I(X,\Ysss)}
\newcommand{\lxsy}{\mathcal L(\Xs,Y)}
\newcommand{\lysx}{\mathcal L(\Ys,X)}
\newcommand{\sumin}{\sum_{i=1}^n}
\newcommand{\wast}{weak$^\ast$}
\begin{document}

\title[Principle of local reflexivity respecting subspaces]{Principle of local reflexivity\\ respecting subspaces}%
\author{Eve Oja}%
\address{Faculty of Mathematics and Computer Science\\ University of Tartu\\ J. Liivi 2\\ 50409 Tartu, Estonia; Estonian Academy of Sciences\\ Kohtu 6\\ 10130 Tallinn, Estonia}
\email{eve.oja@ut.ee}%
\thanks{This research was partially supported by Estonian Science Foundation Grant 8976 and Estonian Targeted Financing Project SF0180039s08}%
\subjclass[2010]{Primary: 46B07. Secondary: 46B20, 46B28, 47B10.}%
\keywords{Principle of local reflexivity respecting subspaces, $\pi_\lambda$-(duality) property of a pair (Banach space, its subspace), integral operators.}%

\begin{abstract}

We obtain a strengthening of the principle of local reflexivity in a general form. The added strength makes local reflexivity operators respect given subspaces. Applications are given to bounded approximation properties of pairs, consisting of a Banach space and its subspace.
\end{abstract}
\maketitle

\section{Introduction and the Main Results}

The principle of local reflexivity (PLR) is a powerful tool in the theory of Banach spaces and its applications. The PLR shows that the bidual $X^{**}$ of a Banach space $X$ is ``locally" almost the same as the space $X$ itself.

\begin{thm}[{The PLR, see \cite{LR} and \cite{JRZ} or, e.g., \cite[p. 53]{JL}}]
Let $X$ be a Banach space. If $E$ and $F$ are finite-dimensional subspaces of $X^{**}$ and $X^*$, respectively, and $\varepsilon>0$, then there exists a one-to-one linear operator $T:E\to X$ such that
\begin{itemize}
\item[${\rm 1^\circ}$]\ $\| T\|, \|T^{-1}\|<1+\varepsilon$,
\item[${\rm 2^\circ}$]\ $y^*(Tx^{**})=x^{**}(y^*)$\quad{\it for\ all}\quad $x^{**}\in E$\quad {\it and}\quad $y^*\in F$,
\item[${\rm 3^\circ}$]\ $Tx^{**}=x^{**}$\quad {\it for\ all}\quad $x^{**}\in E\cap X$.
\end{itemize}
\end{thm}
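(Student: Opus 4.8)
The plan is to establish $1^\circ$--$3^\circ$ in three stages, after two harmless reductions. \emph{Reductions.} First, since a finite-dimensional $F$ may be enlarged without weakening the hypotheses---condition $2^\circ$ for a larger $F$ implies it for the original, while $1^\circ$ and $3^\circ$ do not involve $F$---I would assume $F$ is $(1-\delta)$-norming for $E$: for every $x^{**}\in E$ there is $y^*\in F$ with $\|y^*\|\le 1$ and $|x^{**}(y^*)|\ge(1-\delta)\|x^{**}\|$, which is arranged by adding to $F$ Hahn--Banach functionals for a finite $\delta$-net of the unit sphere of $E$. Second, I would run everything with a parameter $\eta>0$ chosen at the end so small that $1+\eta<1+\varepsilon$ and $(1-\delta)^{-1}<1+\varepsilon$. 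Identifying $X$ with its image in $X^{**}$ and letting $R\colon X\to F^*$, $Rx=x|_F$, be the (surjective, by linear independence of a basis of $F$) restriction map, condition $2^\circ$ reads precisely $RT=\psi$, where $\psi\colon E\to F^*$, $\psi(x^{**})=x^{**}|_F$, is fixed; the inclusion $\iota\colon E\hookrightarrow X^{**}$ is an isometric lift of $\psi$ through the restriction $X^{**}\to F^*$. Thus the task is to lift $\psi$ through $R$ to an operator into $X$ with controlled norm.

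\emph{Core approximation (the hard part).} The crux is to produce $T\colon E\to X$ with $RT=\psi$ and $\|T\|\le 1+\eta$. The difficulty is exactly that Goldstine's theorem supplies, for each single $x^{**}$, an $X$-vector of comparable norm agreeing with $x^{**}$ on $F$, but assembling these pointwise approximants into one \emph{linear} operator with a \emph{uniform} operator-norm bound over all of $E$ is the real content. I would obtain this by a vector-valued Helly argument. Fix an Auerbach basis $x_1^{**},\dots,x_n^{**}$ of $E$ and a basis $y_1^*,\dots,y_m^*$ of $F$; the unknown vectors $x_i=Tx_i^{**}\in X$ must satisfy the finite moment system $y_j^*(x_i)=x_i^{**}(y_j^*)$ together with $\|\sum_i t_i x_i\|\le(1+\eta)\|\sum_i t_i x_i^{**}\|$ for all scalars $t_i$. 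Discretizing the norm inequality over a fine net of the unit sphere of $E$ reduces this to finitely many linear constraints, whose consistency is witnessed by the $x_i^{**}$ themselves (since $\|\iota\|=1$ gives the required dual feasibility inequalities), so the scalar Helly/moment theorem yields approximate solutions; a weak$^*$-compactness limit over refining nets then removes the discretization and produces the genuine $T$. Conceptually this is the statement that the weak$^*$-compact set of norm-$(1+\eta)$ lifts of $\psi$ in $\mathcal L(E,X^{**})\cong(X^{**})^n$, which contains $\iota$, meets $\mathcal L(E,X)$---here the surjectivity of $R$ onto the finite-dimensional $F^*$ is what lets Goldstine-density survive the side constraints.

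\emph{Upgrading.} With $F$ norming, the identity $RT=\psi$ forces a lower bound: for $x^{**}\in E$, choosing the norming $y^*\in F$ gives $\|Tx^{**}\|\ge|y^*(Tx^{**})|=|x^{**}(y^*)|\ge(1-\delta)\|x^{**}\|$, whence $\|T^{-1}\|\le(1-\delta)^{-1}<1+\varepsilon$ and, in particular, $T$ is one-to-one. For $3^\circ$, note that $E_0:=E\cap X$ already lies in $X$, where $\iota$ lands in $X$ and satisfies $R|_{E_0}=\psi|_{E_0}$ automatically; I would therefore choose the Auerbach basis so that $x_1^{**},\dots,x_k^{**}$ span $E_0$ and prescribe $Tx_i^{**}=x_i^{**}$ for $i\le k$ in the Helly problem. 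This extra constraint is consistent with $RT=\psi$ and adds no tension to the norm bound, since the partial assignment is the restriction of the isometric $\iota$; hence the core lemma still applies. Collecting the stages gives $T$ satisfying $1^\circ$--$3^\circ$ for the enlarged $F$, and a fortiori for the original one. The single point I expect to demand real care is the core approximation: transferring the per-vector Goldstine estimate into a uniform bound on a single linear operator, i.e.\ the vector-valued Helly step.
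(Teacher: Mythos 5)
Your overall strategy---enlarge $F$ to be almost norming for $E$, lift the restriction map $\psi$ through $R$ with norm close to $1$, then read off injectivity and $\|T^{-1}\|$ from the norming functionals---is the classical Dean/Johnson--Rosenthal--Zippin route, and it can be made to work; but the core step, which you yourself single out as the crux, is not proved, and two of the devices you propose for it fail as written. First, discretizing the operator-norm inequality over a net of the sphere of $E$ does \emph{not} reduce it to ``finitely many linear constraints'': each constraint $\|\sum_i t_i x_i\|\le(1+\eta)\|\sum_i t_i x_i^{**}\|$ is a convex norm constraint, equivalent to infinitely many linear inequalities indexed by $B_{X^*}$, and the scalar Helly theorem accepts finitely many linear \emph{equalities} plus a single ambient-norm bound. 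To invoke Helly you must bundle the finitely many discretized norm constraints into one norm $|||\cdot|||$ on $X^n$ (the maximum of the constraint seminorms) and then know that the unit ball of $(X^n,|||\cdot|||)$ is weak$^*$ dense in the set of tuples $(z_i^{**})\in(X^{**})^n$ obeying the analogous inequalities; equivalently, that the bidual of $(X^n,|||\cdot|||)$ is $((X^{**})^n,|||\cdot|||)$ with the same formula. That identification is exactly Dean's lemma $\mathcal L(E,X)^{**}=\mathcal L(E,X^{**})$ in disguise, i.e.\ exactly your phrase ``Goldstine-density survives the side constraints''---it is the heart of the PLR, and in your text it is asserted, not proved (it can be proved by an annihilator/bipolar computation exploiting the finite-dimensional coefficient space, but that computation is the actual content). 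Second, removing the discretization ``by a weak$^*$-compactness limit over refining nets'' is a step that genuinely fails: weak$^*$ cluster points of tuples in $X^n$ lie in $(X^{**})^n$, so the limit operator maps $E$ into $X^{**}$ again, which is precisely the problem being solved; and inside $X^n$ there is no compactness to use. The correct removal is a perturbation estimate, with no limit at all: if the bound holds on a $\rho$-net of the sphere of $E$, then $\|T\|\le(1+\eta)+\rho\|T\|$, hence $\|T\|\le(1+\eta)/(1-\rho)$, since $\dim E<\infty$ guarantees $\|T\|<\infty$ a priori.

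There is a further gap in your treatment of $3^\circ$: the conditions $Tx_i^{**}=x_i^{**}$, $i\le k$, are $X$-valued equalities, not scalar ones, so they cannot simply be ``prescribed in the Helly problem''. One must instead eliminate these unknowns---substitute the known vectors $x_i^{**}\in E\cap X$ and solve only for $(x_{k+1},\dots,x_n)$, which turns the norm constraints into affinely shifted ones and forces a homogenization before the bipolar argument; a naive post-hoc correction such as $T+(\iota-T)P$, with $P$ a projection of $E$ onto $E\cap X$, preserves $2^\circ$ but destroys the norm bound, because nothing controls $\|(\iota-T)|_{E\cap X}\|$. (Two minor slips in the same direction: an Auerbach basis of $E$ whose first $k$ vectors span $E\cap X$ need not exist, though Auerbach-ness is inessential; and elements of $X^{**}$ need not have norm-attaining functionals in $X^*$, though near-norming ones suffice.) For comparison, the paper never proves Theorem 1.1 directly: it obtains it as an immediate specialization of Theorem 1.2 (take $E$ in place of $X$, $X$ in place of $Y$, and $S\colon E\to X^{**}$ the identity embedding), and Theorem 1.2 rests on Theorem 1.3 and the Main Lemma 2.1, whose engine is Goldstine's theorem applied inside the constrained operator space via Grothendieck's duality $(\mathcal F_{w^*}(X^*,Y))^*=\mathcal I(X,Y^*)$. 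That duality plays there precisely the role that your unproved bidual identification would have to play here.
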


The PLR was discovered by Lindenstrauss and Rosenthal \cite{LR} in 1969. It was improved by Johnson, Rosenthal, and  Zippin \cite{JRZ} in 1971. Since then, many new proofs, refinements, and generalizations of the PLR have been given in the literature (see, e.g., \cite{B2} and \cite{OP} for results and references).

Recently, the concept of the bounded approximation property of pairs was introduced and studied by Figiel, Johnson, and Pe{\l}czy\'{n}ski in the important paper \cite{FJP}. This concept involves finite-rank operators fixing a subspace in a given Banach space (see Section 4). Studies in \cite{FJP} and \cite{OT} seem to indicate a need for some kind of PLR that would respect subspaces. In the present paper, we shall establish such versions of PLR, see Theorems 1.2 and 1.3 below.

First, let us fix some (standard) notation. For Banach spaces $X$ and $Y$, both real or both complex, we denote by $\mathcal{L}(X,Y)$ the Banach space of all bounded linear operators from $X$ to $Y$. And let $\mathcal{F}(X,Y)$ denote its subspace of finite-rank operators. If $U$ is a subspace of $X$, then $U^\bot$ is its annihilator in the dual space $X^*$, i.e., $U^\bot=\{x^*\in X^*: x^*(x)=0\ \forall x\in U\}$.

\begin{thm}[PLR respecting subspaces]
Let $X$ and $Y$ be Banach spaces, and let $U$ and $V$ be closed subspaces of $X$ and $Y$, respectively. Let $S\in \mathcal{F}(X^{**}, Y^{**})$ satisfy $S(U^{\bot\bot})\subset V^{\bot\bot}$. If $E$ and $F$ are finite-dimensional subspaces of $X^{**}$ and $Y^*$, respectively, and $\varepsilon>0$, then there exists an operator $T\in \mathcal{F}(X,Y)$ satisfying $T(U)\subset V$ such that
\begin{itemize}
\item[${\rm 1^\circ}$]\ $\bigl| \| T\|-\| S\|\bigr|<\varepsilon$,
\item[${\rm 2^\circ}$]\ $x^{**}(T^*y^*)=(Sx^{**})(y^*)$\quad {\it for\ all}\quad $x^{**}\in E$\quad {\it and\ all}\quad $y^*\in F$,
\item[${\rm 3^\circ}$]\ $T^{**}x^{**}=Sx^{**}$\quad {\it for\ all\ those}\quad $x^{**}\in E$\quad {\it for\ which}\quad $Sx^{**}\in Y$.
\end{itemize}
Moreover, if the restriction $S|_E$ is one-to-one, then also $T^{**}|_E$ is, and

\begin{itemize}
\item[${\rm 1^{\circ\circ}}$]\ $\|(T^{**}|_E)^{-1}\| < \|(S|_E)^{-1}\| + \varepsilon$ .
\end{itemize}
In the special case, when $X=Y$ and $S$ is a projection, also $T$ is a projection.
\end{thm}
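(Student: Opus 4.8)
The plan is to reduce all of the listed conclusions to two things: a single existence-with-norm-control statement obtained by trace duality, and a bookkeeping enlargement of $E$ and $F$ that makes the remaining assertions automatic. First I would observe that, at no cost, one may replace $E$ and $F$ by larger finite-dimensional spaces $E'\supset E$, $F'\supset F$, since proving the theorem for $(E',F')$ proves it for $(E,F)$: conditions $2^\circ$ and $3^\circ$ for $F'\supset F$, $E'\supset E$ only strengthen the corresponding statements. I would enlarge $F$ so that it $(1-\delta)$-norms the finite-dimensional space $S(E)\subset Y^{**}$, and enlarge $E,F$ to contain unit vectors $x_0^{**},y_0^*$ with $\langle Sx_0^{**},y_0^*\rangle>\|S\|-\varepsilon$. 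Granting $T$ with $2^\circ$, the lower bound in $1^\circ$ then follows from $\|T\|=\|T^{**}\|\ge|\langle T^{**}x_0^{**},y_0^*\rangle|=|\langle Sx_0^{**},y_0^*\rangle|>\|S\|-\varepsilon$; and if $S|_E$ is one-to-one, then for $x^{**}\in E$ condition $2^\circ$ gives $\|T^{**}x^{**}\|\ge\sup_{y^*\in F,\|y^*\|\le1}|\langle Sx^{**},y^*\rangle|\ge(1-\delta)\|Sx^{**}\|$, yielding injectivity of $T^{**}|_E$ together with $1^{\circ\circ}$ once $\delta$ is small. Everything thus reduces to producing $T\in\mathcal{F}(X,Y)$ with $T(U)\subset V$ satisfying $2^\circ$, $3^\circ$ and $\|T\|<\|S\|+\varepsilon$.

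For this core statement I would use trace duality. Encode $2^\circ$ by the finite-dimensional space of functionals $\Phi=\operatorname{span}\{x^{**}\otimes y^*:x^{**}\in E,\ y^*\in F\}\subset X^{**}\otimes Y^*$, acting on operators by $\langle\phi,T\rangle:=\sum_i\langle T^{**}x_i^{**},y_i^*\rangle$ for $\phi=\sum_i x_i^{**}\otimes y_i^*$; condition $2^\circ$ is exactly $\langle\phi,T\rangle=\langle\phi,S\rangle$ for all $\phi\in\Phi$, and the exact requirements $3^\circ$ (finitely many, since they force $T^{**}x^{**}=Sx^{**}\in Y$ for $x^{**}$ in the subspace $\{x^{**}\in E:Sx^{**}\in Y\}$) can be appended as further such constraints. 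The minimal norm of a finite-rank $T$ meeting these linear constraints equals, by a Hahn–Banach argument, $\sup\{|\langle\phi,S\rangle|:\phi\in\Phi,\ \|\phi\|_{\mathcal{L}(X,Y)^*}\le1\}$. Now $|\langle\phi,S\rangle|\le\|S\|\,\iota(\phi)$, where $\iota(\phi)=\sup\{|\langle\phi,A\rangle|:A\in\mathcal{L}(X^{**},Y^{**}),\ \|A\|\le1\}$, simply because $S/\|S\|$ lies in that unit ball. It remains to identify $\|\phi\|_{\mathcal{L}(X,Y)^*}$ with $\iota(\phi)$: the inequality $\le$ is trivial from the isometric embedding $T\mapsto T^{**}$, and the reverse is precisely the subspace-free operator form of the PLR, which I would deduce from Theorem 1.1. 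This gives minimal norm $\le\|S\|$, hence $T$ with $2^\circ$, $3^\circ$ and $\|T\|<\|S\|+\varepsilon$.

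The \emph{main obstacle} is to superimpose the exact constraint $T(U)\subset V$ without paying in norm. Here I would restrict the admissible class to $\mathcal{L}_V=\{T\in\mathcal{L}(X,Y):T(U)\subset V\}$, equivalently $\{T:T^{**}(U^{\perp\perp})\subset V^{\perp\perp}\}$, and repeat the duality in this Banach space. The danger is that shrinking the admissible class can only increase the minimal norm; what saves the estimate is exactly the hypothesis $S(U^{\perp\perp})\subset V^{\perp\perp}$, which places $S$ in the bidual class $\{A:X^{**}\to Y^{**}:A(U^{\perp\perp})\subset V^{\perp\perp}\}$, so that $|\langle\phi,S\rangle|\le\|S\|\,\iota_V(\phi)$ with $\iota_V$ the integral norm relative to this subspace-respecting class. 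The heart of the proof is then a subspace-respecting density statement — that the unit ball of $\{T^{**}:T\in\mathcal{L}_V,\ \|T\|\le1\}$ is, tested against $\Phi$, weak$^\ast$-dense enough in the unit ball of the bidual class — upgrading Theorem 1.1 so that the approximating operators carry $U$ into $V$ \emph{exactly}. I expect this exactness (as opposed to merely $\langle\cdot,\cdot\rangle$-approximate vanishing on $U$, which a quotient argument through $Y/V$ would give) to be the genuinely hard point, requiring the paper's $\pi_\lambda$-type machinery or a dedicated correction lemma built on Theorem 1.1.

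Finally, for the special case $X=Y$ with $S$ a projection, I would arrange $T$ to be idempotent by mirroring the spectral structure of $S$. Enlarging $E$ to contain a basis $w_1,\dots,w_m$ of $\operatorname{ran}S$, write $S=\sum_{k=1}^m\phi_k\otimes w_k$ with $\langle w_l,\phi_k\rangle=\delta_{kl}$. The construction above produces $T=\sum_{k=1}^m g_k\otimes v_k$ with $v_k\in X$, $g_k\in X^*$ whose relevant pairings are within $\varepsilon$ of those of the $w_k,\phi_k$; the biorthogonality relations $\langle v_l,g_k\rangle=\delta_{kl}$ then hold up to a small error, and since biorthogonality of a nearly-biorthogonal finite system can be restored by a finite-dimensional correction of size $O(\varepsilon)$, I can replace $T$ by a genuinely biorthogonal $\sum_k g_k\otimes v_k$, which satisfies $T^2=T$. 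Keeping this correction inside $\mathcal{L}_V$ and small enough preserves $1^\circ$–$3^\circ$, completing the proof.
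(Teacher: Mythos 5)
Your peripheral reductions are sound and in fact parallel the paper's own: the lower bound in $1^\circ$ via near-norming vectors adjoined to $E$ and $F$, and the derivation of injectivity of $T^{**}|_E$ together with $1^{\circ\circ}$ from $2^\circ$ after enlarging $F$ by a finite norming set for $S(E)$, are essentially the paper's part (b) of the proof of Theorem 1.2 (done there via a $\delta$-net $Se_1,\dots,Se_n$ of $S(S_E)$ and functionals $f_i\in S_{Y^*}$). The trace-duality frame is also the paper's declared basic idea. But your argument has a genuine hole exactly where you yourself locate it: the claim that the unit ball of $\{T\in\mathcal{L}(X,Y):T(U)\subset V\}$ norms the constraint functionals as well as the unit ball of $\{A\in\mathcal{L}(X^{**},Y^{**}):A(U^{\perp\perp})\subset V^{\perp\perp}\}$ \emph{is} the hard content of the theorem, and you only say you ``expect'' it to need a dedicated lemma. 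In the paper this is the Main Lemma (Lemma 2.1): one passes to the annihilators $\mathcal{R}^\perp\subset\mathcal{I}(X,Y^*)$ and $\mathcal{S}^\perp\subset\mathcal{I}(X,Y^{***})$ of the two subspace-respecting operator classes, shows that the isometry $JA=j_{Y^*}A$ carries $\mathcal{R}^\perp$ into $\mathcal{S}^\perp$ (so the induced map $\mathcal{R}^*\to\mathcal{S}^*$ has norm at most $1$), and applies Goldstine plus a convex-combination argument; Theorem 1.2 is then obtained by applying the resulting Theorem 1.3 \emph{twice}, first producing $R\in\mathcal{F}(Y^*,X^*)$ with $R(V^\perp)\subset U^\perp$ and $R^*x^{**}=Sx^{**}$ on $E$, then $T$ from $R$. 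Nothing in your proposal substitutes for this step, so the proof is incomplete at its core.

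Two further concrete flaws. First, $3^\circ$ is \emph{not} ``finitely many'' constraints of the form $\langle\phi,T\rangle=\langle\phi,S\rangle$ with $\phi\in X^{**}\otimes Y^*$: the identity $T^{**}x^{**}=Sx^{**}$ is a vector equation in $Y$, equivalent to testing against all of $Y^*$, and finitely many tensors enforce agreement only against finitely many functionals. The paper obtains $3^\circ$ not from duality constraints but from an exact perturbation, $S_\alpha=TP+T_\alpha(I-P)-Q^*(T_\alpha-T)(I-P)$ with $\operatorname{ran}P=T^{-1}(Y)$, where $P$ and $Q$ are chosen by Lemma 2.2 so as to respect $G$ and $V^\perp$ --- which is also what keeps the corrected operators inside the subspace-respecting class. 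Second, in the projection case your plan to ``restore biorthogonality by a finite-dimensional correction of size $O(\varepsilon)$'' and still ``preserve $1^\circ$--$3^\circ$'' fails: $2^\circ$ and $3^\circ$ are exact equalities, not open conditions, and a generic small perturbation destroys them (and can destroy $T(U)\subset V$ as well). The paper needs no perturbation here: Theorem 1.3 provides $\operatorname{ran}T^*=\operatorname{ran}S$ (its condition $2^{\circ\circ}$) and $T^*y^*=Sy^*$ for $y^*$ in an enlarged $F\supset\operatorname{ran}S$, whence $T^*T^*y^*=ST^*y^*=T^*y^*$ for all $y^*$, i.e.\ $T^2=T$ holds exactly. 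To repair your write-up you would need to prove the subspace-respecting density statement (the Main Lemma) and replace both the ``finitely many constraints'' treatment of $3^\circ$ and the biorthogonality-correction argument by exact mechanisms of this kind.
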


As an illustration how to apply Theorem 1.2, let us look at the probably most well-known variant of Bellenot's PLR \cite{Be}.
This PLR asserts the same as Theorem 1.1 but, additionally, a closed subspace $W$ of $X$ is given and, correspondingly, the assertion $T(E\cap W^{\bot\bot})\subset W$ is obtained. (Thus, for $W=\{ 0\}$, Bellenot's PLR reduces to Theorem 1.1.)

Bellenot's PLR (and, in particular, Theorem 1.1) follows immediately from Theorem 1.2. Indeed, one only has to take, in Theorem 1.2, a Banach space $X$ instead of $Y$, $E$ instead of $X$, $W$ instead of $V$, and $U=E\cap W^{\bot\bot}$. If $S:E\to X^{**}$ is the identity embedding, then $S(U^{\bot\bot})=U\subset W^{\bot\bot}$, and $T:E\to X$ given by Theorem 1.2 clearly satisfies conditions ${\rm 1^\circ}$--${\rm 3^\circ}$ of Theorem 1.1, and also $T(E\cap W^{\bot\bot})\subset W$.

Theorem 1.2 will be proven in Section 3. Its proof will be based on Theorem 1.3 below, which is another main result of the present paper. It could be called ``a theorem on locally conjugate operators who respect subspaces".

\begin{thm}
Let $X$ and $Y$ be Banach spaces, and let $U$ and $V$ be closed subspaces of $X$ and $Y$, respectively. Let $S\in \mathcal{F}(Y^*,X^*)$ satisfy $S(V^\bot)\subset U^\bot$. If $F$ is a finite-dimensional subspace of $Y^*$ and $\varepsilon>0$, then there exists $T\in\mathcal{F}(X,Y)$ satisfying $T(U)\subset V$ such that
\begin{itemize}
\item[${\rm 1^\circ}$]\ $\bigl| \| T\|-\| S\|\bigr|<\varepsilon$,
\item[${\rm 2^\circ}$]\ $T^*y^*=Sy^*$\quad {\it for\ all}\quad $y^*\in F$,
\item[${\rm 2^{\circ\circ}}$]\ ${\rm ran}\ T^*= {\rm ran}\ S$,
\item[${\rm 3^\circ}$]\ $T^{**}x^{**}=S^*x^{**}$\quad {\it for\ all\ those}\quad $x^{**}\in X^{**}$\quad {\it for\ which}\quad $S^*x^{**}\in Y$.
\end{itemize}
In the special case, when $X=Y$ and $S$ is a projection, also $T$ is a projection.
\end{thm}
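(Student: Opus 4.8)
The plan is to realize $S$ as the adjoint of a finite-rank operator that factors through a fixed finite-dimensional subspace of $Y^{**}$, and then to invoke Bellenot's PLR \cite{Be}, applied to the space $Y$, to replace that subspace by genuine vectors of $Y$ while respecting $V$. First I would fix a basis $x_1^*,\dots,x_m^*$ of the finite-dimensional range $\ran S\subset X^*$ and write $Sy^*=\sum_{i=1}^m\psi_i(y^*)\,x_i^*$ with uniquely determined $\psi_i\in Y^{**}$; since the $x_i^*$ form a basis of $\ran S$, the $\psi_i$ are automatically linearly independent. I put $G=\spann\{\psi_1,\dots,\psi_m\}\subset Y^{**}$, a finite-dimensional space, and record that the adjoint acts by $S^*x^{**}=\sum_i\langle x^{**},x_i^*\rangle\,\psi_i\in G$ for every $x^{**}\in X^{**}$, so that $\|S^*\|=\|S\|$.

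Next I would apply Bellenot's PLR to $Y$ with the finite-dimensional spaces $E=G\subset Y^{**}$ and $F\subset Y^*$, the closed subspace $W=V$, and a small parameter $\delta>0$. This produces a one-to-one operator $\tau\colon G\to Y$ with $\|\tau\|,\|\tau^{-1}\|<1+\delta$, with $\langle\tau\psi,y^*\rangle=\psi(y^*)$ for all $\psi\in G$ and $y^*\in F$, with $\tau\psi=\psi$ for all $\psi\in G\cap Y$, and, crucially, with $\tau(G\cap V^{\bot\bot})\subset V$. I then define $T\in\mathcal F(X,Y)$ by $Tx=\sum_i x_i^*(x)\,\tau\psi_i$; equivalently $T=\tau\circ\sigma$, where $\sigma\colon X\to G$ sends $x$ to $S^*x$ (identifying $x$ with its canonical image in $X^{**}$). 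Taking biduals gives the single clean identity $T^{**}x^{**}=\tau(S^*x^{**})$ for all $x^{**}\in X^{**}$, on which every conclusion rests.

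From this identity the verifications are routine. The two-sided estimate $\|\tau^{-1}\|^{-1}\|S^*x^{**}\|\le\|T^{**}x^{**}\|\le\|\tau\|\,\|S^*x^{**}\|$, together with $\|S^*\|=\|S\|$, yields $(1+\delta)^{-1}\|S\|\le\|T\|\le(1+\delta)\|S\|$, so $1^\circ$ holds once $\delta$ is chosen small relative to $\varepsilon$ and $\|S\|$. Since $T^*y^*=\sum_i\langle\tau\psi_i,y^*\rangle x_i^*$, the matching property of $\tau$ on $F$ gives $T^*y^*=\sum_i\psi_i(y^*)x_i^*=Sy^*$ for $y^*\in F$, which is $2^\circ$; injectivity of $\tau$ makes $\tau\psi_1,\dots,\tau\psi_m$ linearly independent, whence $\ran T^*=\spann\{x_i^*\}=\ran S$, which is $2^{\circ\circ}$. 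If $S^*x^{**}\in Y$, then $S^*x^{**}\in G\cap Y$ and $\tau$ fixes it, so $T^{**}x^{**}=S^*x^{**}$, giving $3^\circ$.

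The one step that genuinely uses the hypothesis $S(V^\bot)\subset U^\bot$ — and the main point of the proof — is the subspace constraint $T(U)\subset V$. For $u\in U$ and $y^*\in V^\bot$ one computes $\langle\sigma u,y^*\rangle=\langle S^*u,y^*\rangle=(Sy^*)(u)=0$, because $Sy^*\in U^\bot$; hence $\sigma(U)\subset G\cap V^{\bot\bot}$, and then $T(U)=\tau(\sigma(U))\subset\tau(G\cap V^{\bot\bot})\subset V$ by the last property of $\tau$. I expect this reduction to be the crux: everything hinges on recognizing that the annihilator hypothesis forces $\sigma(U)$ into $G\cap V^{\bot\bot}$, exactly the set that Bellenot's refinement controls. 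Finally, for the projection case $X=Y$ with $S^2=S$, I would first enlarge $F$ to contain $x_1^*,\dots,x_m^*$; since $S$ is a projection the families $\{\psi_i\}$ and $\{x_i^*\}$ are biorthogonal (as $Sx_j^*=x_j^*$ forces $\psi_i(x_j^*)=\delta_{ij}$), and the matching property then transfers this to $x_i^*(\tau\psi_j)=\delta_{ij}$, from which $T^2=T$ follows by a direct computation.
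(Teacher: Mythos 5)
Your proof is correct, but it takes a genuinely different route from the paper's. You reduce Theorem 1.3 to Bellenot's PLR: writing $Sy^*=\sum_i\psi_i(y^*)x_i^*$ with $\{x_i^*\}$ a basis of ${\rm ran}\,S$, you factor the desired operator as $T=\tau\circ(S^*|_X)$, where $\tau$ comes from Bellenot's theorem applied to $G=\mathrm{span}\{\psi_1,\dots,\psi_m\}\subset Y^{**}$ and the subspace $V$; the hypothesis $S(V^\bot)\subset U^\bot$ is correctly converted into $S^*(U)\subset G\cap V^{\bot\bot}$, which is precisely the set that Bellenot's refinement maps into $V$. All the verifications are sound: the two-sided norm estimate for $1^\circ$, the matching property for $2^\circ$, injectivity of $\tau$ for $2^{\circ\circ}$, the fixing property on $G\cap Y$ for $3^\circ$, and the biorthogonality argument (after enlarging $F$ to contain the $x_i^*$) for the projection case. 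The paper, by contrast, proves Theorem 1.3 from scratch: it deduces it from a net version (Theorem 3.1), whose proof rests on the Main Lemma (Lemma 2.1), established via Grothendieck's identification of $(\mathcal F_{w^*}(X^*,Y))^*$ with the integral operators $\mathcal I(X,Y^*)$, Goldstine's theorem, a convex-combination argument, and a finite-dimensional perturbation step. Note that the paper's logical order is the reverse of yours: Theorem 1.3 yields Theorem 1.2, of which Bellenot's PLR is exhibited as an immediate special case. Your argument is therefore not circular---Bellenot's 1984 theorem has its own independent proof---but it does make Theorem 1.3 a formal consequence of Bellenot's result rather than an independently established strengthening, so it could not serve the paper's stated purpose of giving a self-contained, Grothendieck-duality-based treatment of PLR with subspaces. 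What your approach buys is brevity and transparency (everything flows from the single identity $T^{**}=\tau\circ S^*$); what the paper's approach buys is self-containedness and the stronger net version of the statement, with convergence assertions such as $T_\alpha^*y^*\to Sy^*$ for all $y^*\in Y^*$, which your single-operator construction does not produce.
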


The special case of Theorem 1.3 without requirements concerning subspaces (i.e., the case when $U=\{0\}$ and $V=\{0\}$) is essentially due to Johnson, Rosenthal, and Zippin \cite[Lemma 3.1 and Corollary 3.2]{JRZ} (where $X$ was assumed to be finite-dimensional; see \cite[Theorem 2.5]{OP} for the infinite-dimensional case and an easier proof). A version of this special case for positive finite-rank operators between Banach lattices was obtained in \cite[Theorem 5.6]{LiO}. Theorem 1.3 will also be proven in Section 3.

Section 4 contains applications of Theorems 1.2 and 1.3 to bounded approximation properties of pairs. These are easy and clear proofs of some classical theorems on approximation properties in their new general context of pairs. For instance, we show that the pair $(X,Y)$, consisting of a Banach space $X$ and its closed subspace $Y$, has the $\pi_{\lambda\mu+\lambda+\mu}$-duality property (see Definition 4.2) whenever the pair $(X^*, Y^\bot)$ has the $\pi_\lambda$-property and $(X,Y)$ has the $\mu$-bounded approximation property. This refines a well-known result from \cite{JRZ}.

The very basic idea of our proofs consists in using Grothendieck's description of the dual of the space of  (weak*-to-weak) continuous finite-rank operators as integral operators (see Section 2). This basic idea comes from our paper \cite[Corollary 2.3]{O1}, where it was used to give an alternative PLR-free proof of Johnson's theorem \cite{J} (see, e.g., \cite[Proposition 3.5]{C}) asserting that the bounded approximation property of a dual space can always be given with conjugate operators. The idea was then used in our papers \cite{LO}, \cite{O3}, \cite{OP}, and \cite{OT}.

In \cite{OP}, departing from Grothendieck's description, we gave a clear proof of the PLR in a general form (encompassing, e.g., Theorem 1.1 together with  its refined version due to Behrends \cite{B1}). However, for instance, Bellenot's PLR (Theorem 1.2 is its strengthening) was not approached in \cite{OP}. To encompass the general case when there are given subspaces to be respected (as in Theorems 1.2 and 1.3), we need further develop the basic idea together with methods from \cite{OP} and \cite{OT}; see Section 2. In doing so, we shall follow the general scheme from \cite{OP}. (Quite natural, since we continue to believe that \cite{OP} gives the best proof of the PLR. We even considered titling \cite{OP} as ``The principle of local reflexivity done right", but we did not dare to paraphrase \cite{A}.)

\section{The Main Lemma}

Let $X$ and $Y$ be Banach spaces. Denote by $\fwsxsy$
the subspace of $\lxsy$ consisting of \wast-to-weak continuous finite-rank operators.
By a well-known description, due to Grothendieck \cite[Chapter I, pp. 124--125]{G}
(see, e.g., \cite[pp. 231--232]{DU} or \cite[p. 58]{R}), $(\fwsxsy)^\ast=\ixys$,
the Banach space of integral operators (equipped with their integral norms).

Recall that $\fwsxsy$ is algebraically the same as the algebraic tensor product $X\otimes Y$, with the rank one operator $x\otimes y: x^* \mapsto x^*(x)y$, $x^*\in X^*$, corresponding to the elementary tensor $x\otimes y$.
Thus, any $S\in\fwsxsy$ can be written as $S=\sumin x_i\otimes\yi$ for some $x_i\in X$ and $\yi\in Y$.
The above-mentioned identification $(\fwsxsy)^\ast=\ixys$ is realized via the duality
\[
\Bigl\langle A,\sumin x_i\otimes\yi\Bigr\rangle=\sumin(A x_i)(\yi).
\]

Our Main Lemma is Lemma 2.1. It reduces to \cite[Lemma 1.2]{OP} in the particular case when $G=\{0\}$ and $V=\{0\}$.

\begin{lem}[Main Lemma]
Let $X$ and $Y$ be Banach spaces, let $G$ be a linear subspace of $X^*$, and $V$ a closed subspace of $Y$. Let $T\in \mathcal{F}_{w^*}(X^*, Y^{**})$ satisfy $T(G)\subset V^{\bot\bot}$. If $F$ is a finite-dimensional subspace of $Y^*$, then there exists a net $(T_\alpha)\subset \mathcal{F}_{w^*}(X^*, Y)$ satisfying $T_\alpha(G)\subset V$ for all $\alpha$ such that
\begin{itemize}
\item[${\rm 1^\circ}$]\ $\|T_\alpha\|\to\| T\|$,
\item[${\rm 2^\circ}$]\ $T^*_\alpha y^*\to T^*y^*$\ {\it for\ all}\ $y^*\in Y^*$,\ {\it in\ particular,}\ $T^*_\alpha y^*=T^*y^*$\ {\it for\ all}\ $\alpha$\ {\it and\ for\ all }\ $y^*\in F$,
\item[${\rm 3^\circ}$]\ $T_\alpha x^*=Tx^*$\ {\it for\ all}\ $\alpha$\ {\it and\ for\ all\ those}\ $x^*\in X^*$\ {\it for\ which}\ $Tx^*\in Y$.
\end{itemize}
\end{lem}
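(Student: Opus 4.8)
The plan is to transfer the whole problem into the duality $(\fwsxsy)^\ast=\ixys$ and to realize $T$ as a functional on $\ixys$, so that the desired net is produced by a single Goldstine/Hahn--Banach density argument with norm control, in the spirit of \cite{OP}. The conditions $1^\circ$--$3^\circ$ together with the constraint $T_\alpha(G)\subset V$ will be encoded as membership of each $T_\alpha$ in one affine set, and the role of the standing hypothesis $T(G)\subset V^{\bot\bot}$ will be precisely to make this affine set nonempty at the right norm.

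First I would fix a representation $T=\sumin x_i\otimes y_i^{\ast\ast}$ with $x_i\in X$ and normalize it so that the $x_i$ are linearly independent and so that $y_1^{\ast\ast},\dots,y_k^{\ast\ast}\in Y$ while $y_{k+1}^{\ast\ast},\dots,y_n^{\ast\ast}$ are linearly independent modulo $Y$. Searching for $T_\alpha=\sumin x_i\otimes y_{i,\alpha}$ with $y_{i,\alpha}\in Y$ and $y_{i,\alpha}=y_i^{\ast\ast}$ for $i\le k$ then makes $3^\circ$ automatic: if $Tx^\ast\in Y$ then, by independence modulo $Y$, the coefficients of $x^\ast$ against $x_{k+1},\dots,x_n$ vanish, so $T_\alpha x^\ast=Tx^\ast$. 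Using biorthogonal functionals $\xi_l\in\Xs$ (so that $z_l:=R\xi_l$ recovers the $l$-th vector of any $R=\sum x_i\otimes z_i$), condition $2^\circ$ on $F$ becomes the finitely many scalar conditions $y^\ast(z_l)=y_l^{\ast\ast}(y^\ast)$, $y^\ast\in F$, and the constraint $T_\alpha(G)\subset V$ becomes $\sum_i c_i z_i\in V$ for $c$ ranging over the finite-dimensional space $\Gamma=\{(x^\ast(x_i))_i:x^\ast\in G\}\subset\kaa^n$. Crucially, each of these, together with the normalization $z_l=y_l^{\ast\ast}$ $(l\le k)$, is expressed by pairing $R$ against integral operators of the form $\xi\otimes y^\ast$; let $N\subset\ixys$ be the span of all these functionals.

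The heart of the argument is then twofold. I would first check that $N$ is weak$^\ast$-closed in $\ixys=(\fwsxsy)^\ast$: it is the sum of a finite-dimensional subspace (coming from $F$) and subspaces of the shape $\Xi\otimes V^\bot$ and $\xi_l\otimes\Ys$, and these are weak$^\ast$-closed because $\Xi$ and $\Gamma$ are finite-dimensional while $V^\bot$ and $\Ys$ are weak$^\ast$-closed. Second, and this is where the hypothesis $T(G)\subset V^{\bot\bot}$ enters decisively, I would note that the functional $\varphi_T\in(\ixys)^\ast$ defined by $\varphi_T(A)=\sumin y_i^{\ast\ast}(Ax_i)$ satisfies $\|\varphi_T\|=\|T\|$ and \emph{vanishes} on the part of $N$ coming from $G,V$: for $\xi=\sum_i c_i\xi_i$ with $c\in\Gamma$ and $v^\ast\in V^\bot$, one gets $\varphi_T(\xi\otimes v^\ast)=v^\ast\bigl(\sum_i c_i y_i^{\ast\ast}\bigr)=0$, since $\sum_i c_i y_i^{\ast\ast}\in V^{\bot\bot}$. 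Together with the (automatic) weak$^\ast$-continuity on the finite-dimensional and the ``into $Y$'' pieces, this shows that $\varphi_T|_N$ is weak$^\ast$-continuous, i.e. it lies in the predual $\fwsxsy/N_\perp$. Hence there exists $R_0\in\fwsxsy$ with $R_0|_N=\varphi_T|_N$ and $\|R_0\|<\|T\|+\eps$; equivalently, the affine set $\mathcal A=\{R\in\fwsxsy:\langle A,R\rangle=\varphi_T(A)\ \forall A\in N\}$ is nonempty and meets $\rho B$, where $\rho=\|T\|+\eps$ and $B$ is the closed unit ball of $\fwsxsy$.

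Finally I would run the density step. Since $N$ is weak$^\ast$-closed and $R_0\in\mathcal A$, one has $\varphi_T\in R_0+N^\perp=\overline{\mathcal A}^{\,w^\ast}$; and because $\|\varphi_T\|=\|T\|<\rho$, a support-function computation---whose only nontrivial ingredient is the inequality $\operatorname{Re}\varphi_T(A)\le\inf_{B\in N}\bigl(\rho\|A-B\|+\operatorname{Re}\varphi_T(B)\bigr)$, immediate from $\|\varphi_T\|\le\rho$---shows $\varphi_T\in\overline{\mathcal A\cap\rho B}^{\,w^\ast}$. Any net $(T_\alpha)\subset\mathcal A\cap\rho B$ converging weak$^\ast$ to $\varphi_T$ then satisfies $T_\alpha(G)\subset V$ together with $2^\circ$ on $F$ and $3^\circ$ (by membership in $\mathcal A$), gives $T_\alpha^\ast y^\ast\to T^\ast y^\ast$ weakly in $X$ from weak$^\ast$ convergence tested on the operators $\psi\otimes y^\ast$, and yields $1^\circ$ after diagonalizing in $\eps\downarrow0$ and invoking weak$^\ast$ lower semicontinuity of the norm. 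The main obstacle I anticipate is exactly this interface between exactness of the constraints and norm control: one must encode $2^\circ$, $3^\circ$ and $T_\alpha(G)\subset V$ by a subspace $N$ that is simultaneously weak$^\ast$-closed and on which $\varphi_T$ is weak$^\ast$-continuous with $\|\varphi_T|_N\|\le\|T\|$, and it is precisely $T(G)\subset V^{\bot\bot}$ that secures the weak$^\ast$-continuity of $\varphi_T$ on the $G$--$V$ part.
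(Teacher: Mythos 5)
Your strategy is genuinely different from the paper's, but as written it has a fatal gap at the final step: the affine set $\mathcal A=\{R\in\fwsxsy:\langle A,R\rangle=\varphi_T(A)\ \forall A\in N\}$ does \emph{not} encode $2^\circ$, $3^\circ$, or $T_\alpha(G)\subset V$. Every functional you place in $N$ is of the form $\xi\otimes\ys$ with $\xi\in\operatorname{span}\{\xi_1,\dots,\xi_n\}$, and $\langle\xi\otimes\ys,R\rangle=\ys(R\xi)$; hence membership in $\mathcal A$ constrains only the restriction of $R$ to the $n$-dimensional subspace $\operatorname{span}\{\xi_1,\dots,\xi_n\}$ of $\Xs$. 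Your translations of the three conditions into these pairings are valid \emph{only} for operators of the special form $R=\sumin x_i\otimes z_i$, and nothing in $N$ forces a member of $\mathcal A$ to have that form. Concretely, the $F$-constraints say only that $R^\ast\ys$ and $T^\ast\ys$ agree at $\xi_1,\dots,\xi_n$, i.e.\ $R^\ast\ys-T^\ast\ys\in\bigcap_l\ker\xi_l$, not that they coincide in $X$: with $n=1$, $T=x_1\otimes y_1^{\ast\ast}$, $G=\{0\}$, $F=\operatorname{span}\{y_0^\ast\}$, $y_1^{\ast\ast}(y_0^\ast)\neq0$, any $R=x'\otimes z$ with $\xi_1(x')=1$, $x'\neq x_1$, $y_0^\ast(z)=y_1^{\ast\ast}(y_0^\ast)$ lies in $\mathcal A$ and violates $2^\circ$. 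Likewise, for $\xs\in G$ your constraints control $R\xi$ for the ``shadow'' $\xi=\sum_i\xs(x_i)\xi_i$, which is a different element of $\Xs$ than $\xs$, so $R(G)\subset V$ fails for general members of $\mathcal A$, and the same defect kills $3^\circ$. Thus the net produced by your density argument need satisfy none of the exact conditions the Lemma requires.

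The natural repair is to add to $N$ the annihilator $N_1=\{A\in\ixys:Ax_i=0,\ i=1,\dots,n\}$ of $M\otimes Y$, $M=\operatorname{span}\{x_i\}$ (note $\varphi_T$ vanishes on $N_1$), which by the bipolar theorem forces $\mathcal A\subset M\otimes Y$, where your translations are correct. But then your two structural claims are no longer ``automatic'': you assert weak$^\ast$-closedness of $N$ and weak$^\ast$-continuity of $\varphi_T|_N$ piecewise, whereas a sum of weak$^\ast$-closed subspaces need not be weak$^\ast$-closed, and a functional continuous on each summand need not be continuous on the sum. The second point is the real crux, not a technicality: continuity of $\varphi_T|_N$ is equivalent to nonemptiness of the corrected $\mathcal A$, i.e.\ to the existence of $z_1,\dots,z_n\in Y$ with $z_l=y_l^{\ast\ast}$ for $l\le k$, $f(z_i)=y_i^{\ast\ast}(f)$ for all $f\in F$ and all $i$, and $\sum_i c_iz_i\in V$ for all $(c_i)\in\Gamma$ --- which is precisely the Main Lemma stripped of norm control. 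This interpolation problem is solvable (for $c\in\Gamma$ one has $\sum_i c_iy_i^{\ast\ast}=T\xs\in V^{\bot\bot}$; the restriction map $V\to F^\ast$ has range the annihilator in $F^\ast$ of $F\cap V^\bot$, which contains $y^{\ast\ast}|_F$ for every $y^{\ast\ast}\in V^{\bot\bot}$; and $Y\cap V^{\bot\bot}=V$ handles the part of $\Gamma$ supported on $\{1,\dots,k\}$), but your proposal never proves it --- it is exactly the step hidden in ``this shows that $\varphi_T|_N$ is weak$^\ast$-continuous''. Once these gaps are filled (and Mazur's theorem is invoked to upgrade the weak convergence in $2^\circ$ to norm convergence, convex combinations staying in $\mathcal A\cap\rho B$), the remaining machinery --- $\|\varphi_T\|=\|T\|$, the support-function duality giving $\varphi_T\in\overline{\mathcal A\cap\rho B}^{\,w^\ast}$, the diagonalization for $1^\circ$ --- is sound, and you would obtain a one-shot proof genuinely different from the paper's, which instead extracts only \emph{limit} forms of $2^\circ$ and $3^\circ$ from a Goldstine argument in the quotient duality $\mathcal R^\ast=\ixys/\mathcal R^\bot$ and then achieves exactness by a perturbation with projections (Lemma 2.2) after reducing to a finite-dimensional domain.
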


In the proof of Lemma 2.1, we shall occasionally need the following auxiliary result.

\begin{lem}[see {\cite[Lemma 4.4]{OT}}]
Let $X$ be a locally convex Hausdorff space. Let $Y$ be a closed subspace and $F$ a finite-dimensional subspace of $X$. Then there exists a continuous linear projection $P$ on $X$ such that ${\rm ran}~P=F$ and $P(Y)\subset Y$.
\end{lem}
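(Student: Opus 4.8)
The plan is to build the projection $P$ by hand as a finite sum of continuous rank-one operators $x_i^\ast\otimes e_i$, where $e_1,\dots,e_n$ is a suitably adapted basis of $F$ and the $x_i^\ast$ are continuous linear functionals forming a biorthogonal system with the $e_i$, subject to extra vanishing conditions on $Y$ that will force $P(Y)\subset Y$. First I would adapt the basis to $Y$: put $F_0=F\cap Y$, choose a basis $e_1,\dots,e_k$ of $F_0$, and extend it to a basis $e_1,\dots,e_n$ of $F$. The crucial elementary observation is that $e_{k+1},\dots,e_n$ are linearly independent modulo $Y$, since any combination of them lying in $Y$ would lie in $F\cap Y=F_0$ and hence be zero.

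Next I would carry out the reduction that pins down which functionals must annihilate $Y$. Suppose $P=\sum_{i=1}^n x_i^\ast\otimes e_i$ with $x_i^\ast(e_j)=\delta_{ij}$; then $P$ is automatically a continuous linear operator with $P^2=P$ and $\ran P=F$ (continuity because each $x_i^\ast$ is continuous, idempotence and range because $Pe_i=e_i$). For $y\in Y$ we may split $Py=\sum_{i\le k}x_i^\ast(y)e_i+\sum_{i>k}x_i^\ast(y)e_i$, where the first sum already lies in $Y$ as $e_i\in Y$ for $i\le k$. By the independence modulo $Y$ noted above, the whole vector $Py$ lies in $Y$ \emph{if and only if} $x_i^\ast(y)=0$ for every $i>k$. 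Hence the requirement $P(Y)\subset Y$ is exactly equivalent to $x_i^\ast\in Y^{\bot}$ for $i=k+1,\dots,n$, with no constraint on $x_1^\ast,\dots,x_k^\ast$ beyond biorthogonality.

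It then remains to produce the functionals. For $i>k$ I would pass to the quotient $X/Y$, which is a locally convex Hausdorff space precisely because $Y$ is closed; letting $q\colon X\to X/Y$ be the quotient map, the images $qe_{k+1},\dots,qe_n$ are linearly independent in $X/Y$, so the Hahn--Banach theorem for locally convex spaces yields continuous functionals on $X/Y$ biorthogonal to them, and pulling these back through $q$ gives continuous $x_i^\ast\in Y^{\bot}$ with $x_i^\ast(e_j)=\delta_{ij}$ for $j>k$ (and $x_i^\ast(e_j)=0=\delta_{ij}$ for $j\le k$ automatically, since $e_j\in Y$). For $i\le k$ I would take the coordinate functional of $e_i$ on the finite-dimensional space $F$ and extend it continuously to all of $X$ by Hahn--Banach, obtaining $x_i^\ast$ with $x_i^\ast(e_j)=\delta_{ij}$ for all $j$. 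Setting $P=\sum_{i=1}^n x_i^\ast\otimes e_i$ then delivers the desired projection.

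I do not expect a genuine obstacle; the content is entirely in the second-paragraph reduction, which isolates exactly the vanishing conditions needed. The only delicate point is the existence of the continuous biorthogonal functionals, and here one must lean on the locally convex Hausdorff structure twice --- once to extend functionals off the finite-dimensional $F$, and once to ensure that $X/Y$ is again Hausdorff so that Hahn--Banach applies there --- rather than on any completeness or normability of $X$.
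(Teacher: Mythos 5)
Your proof is correct and complete: the reduction to the biorthogonality conditions $x_i^\ast(e_j)=\delta_{ij}$ together with $x_i^\ast\in Y^{\bot}$ for $i>k$ is exactly right, and both uses of the Hausdorff hypothesis (continuity of linear functionals on the finite-dimensional pieces, and Hahn--Banach applied in $X$ and in the quotient $X/Y$, which is Hausdorff because $Y$ is closed) are correctly placed. Note that the paper itself does not prove this lemma but only cites it from \cite[Lemma 4.4]{OT}; your argument --- adapting a basis of $F$ to $F\cap Y$ and producing the complementary functionals through the quotient map $q\colon X\to X/Y$ --- is the standard proof and, in essence, the one given in that reference, so you have supplied precisely what the citation leaves implicit.
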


\vskip\baselineskip
{\it Proof of Lemma 2.1.} (a) We start by using the canonical identifications $(\fwsxsy)^\ast=\ixys$ and $(\fwsxsyss)^\ast=\ixysss$ as in \cite[Lemma 1.1]{OP}. But we continue by developing \cite[proofs of Lemma 2.2 and Theorem 1.2]{OT} as follows.

Define $J:\mathcal{I}(X,Y^*)\to \mathcal{I}(X,Y^{***})$ by $JA=j_{Y^*}A$, $A\in \mathcal{I}(X,Y^*)$, where $j_{Y^*}:Y^*\to Y^{***}$ is the canonical embedding. Then $J$ is well known to be isometric (see, e.g., \cite[p. 65]{R}).

Denote $\mathcal{R}=\{R\in \mathcal{F}_{w^*}(X^*, Y):R(G)\subset V\}$ and $\mathcal{S}=\{ S\in \mathcal{F}_{w^*}(X^*,Y^{**}):S(G)\subset V^{\bot\bot}\}$. Consider $\mathcal{R}^\bot$ and $\mathcal{S}^\bot$ as subspaces of $\mathcal{I}(X,Y^*)$ and $\mathcal{I}(X,Y^{***})$, respectively. Similarly to the quite technical but elementary proof of \cite[Lemma 2.2]{OT}, we can show that $J(\mathcal{R}^\bot)\subset\mathcal{S}^\bot$. This implies that the operator
\[
\bar{J}: \mathcal{I}(X,Y^*) /\mathcal{R}^{\perp} \to \mathcal{I}(X,Y^{***})/ \mathcal{S}^{\perp},
\]
given by
\[
\bar{J}(A+\mathcal{R}^{\perp})= JA+\mathcal{S}^{\perp}, \ A \in \mathcal{I}(X,Y^*),
\]
is well defined.

Since $\mathcal{R}$ and $\mathcal{S}$ are linear subspaces of $\mathcal{F}_{w^*}(X^*,Y)$ and $\mathcal{F}_{w^*}(X^*,Y^{**})$, their duals $\mathcal{R}^*$ and $\mathcal{S}^*$ are canonically isometrically isomorphic to $(\mathcal{F}_{w^*}(X^*, Y))^*/\mathcal{R}^\bot$ and $(\mathcal{F}_{w^*}(X^*,Y^{**})^*/\mathcal{S}^\bot$. Hence, under the canonical identifications, $\mathcal{R}^*$ and $\mathcal{S}^*$ are isometrically isomorphic to $\mathcal{I}(X,Y^*)/\mathcal{R}^\bot$ and $\mathcal{I}(X,Y^{***})/\mathcal{S}^\bot$, respectively. Let $s:\mathcal{R}^*\to\mathcal{I}(X,Y^*)/\mathcal{R}^\bot$ and $t:\mathcal{I}(X,Y^{***})/\mathcal{S}^\bot\to\mathcal{S}^*$ denote the corresponding isometries.

Define $\Phi:\mathcal{R}^*\to\mathcal{S}^*$ by $\Phi=t\bar{J}s$.
Then clearly $\|\Phi\|\leq1$ and thus
$\Phi^\ast(T)\in\|T\|B_{\mathcal{R}^{**}}$.
By Goldstine's theorem,
there is a net $(\Ta)\subset\mathcal{R}$, meaning that $T_\alpha(G)\subset V$ for all $\alpha$, converging weak$^\ast$ to $\Phi^\ast(T)$ such that $\sup_\alpha\|\Ta\|\leq\|T\|$.
In particular, for all $\xs\in\Xs$ and $\ys\in\Ys$, considering $x^*\otimes y^*$ in $\mathcal{R}^*$ and $x^*\otimes j_{Y^*}y^*$ in $\mathcal{S}^*$, we have
\begin{align*}
\xs(\Tas\ys)
&=\langle\xs\otimes\ys,\Ta\rangle\to\bigl(\Phi^\ast(T)\bigr)(\xs\otimes\ys)=\bigl(\Phi(\xs\otimes\ys)\bigr)(T)\\
&=\langle\xs\otimes j_{\Ys}\ys, T\rangle=\xs(\Ts\ys).
\end{align*}
This means that $\Tas\to\Ts|_\Ys$ in the weak operator topology of $\lysx$.
Now, using a convex combination argument as in the proof of \cite[Lemma 1.1]{OP}, we may assume that our $(T_\alpha)$ also satisfies ${\rm 1^\circ}$, $T_\alpha^*y^*\to T^*y^*$ for all $y^*\in Y^*$, and $T_\alpha x^*\to Tx^*$ for all those $x^*\in X^*$ for which $Tx^*\in Y$.

\vskip\baselineskip
(b) Following the scheme of the proof of \cite[Lemma 1.2]{OP}, we  first prove the Main Lemma in the particular case when $X$ is finite-dimensional. Then $G$ is a closed subspace of $X$. Endowing $Y^*$ with its  weak* topology, we have that $V^\bot$ is a closed subspace of $Y^*$. Using now Lemma 2.2, we can choose projections $P\in\mathcal{L}(X^*)$ and $Q\in\mathcal{L}(Y^*)$, $Q$ being weak*-to-weak* continuous, such that ${\rm ran}~P=T^{-1}(Y), P(G)\subset G$, ${\rm ran}~Q=F$, and $Q(V^\bot)\subset V^\bot$. Then there also exists a projection $R\in \mathcal{L}(Y)$ such that $R^*=Q$ and $R(V)\subset V$.

Applying the perturbation argument from \cite[proof of Lemma 1.2]{OP}, we denote, for all $\alpha$,
$$
S_\alpha=TP+T_\alpha(I-P)-Q^*(T_\alpha-T)(I-P).
$$
Then ${\rm ran}~S_\alpha\subset Y$, because ${\rm ran}~TP\subset Y$ (recall that ${\rm ran}~P=T^{-1}(Y)$) and ${\rm ran}~Q^*={\rm ran}~R^{**}\subset Y$ (since $R$ is of finite rank). Hence, $S_\alpha\in\mathcal{F}(X^*,Y)=\mathcal{F}_{w^*}(X^*,Y)$, because $\dim X<\infty$.

Observing that
$$
S_\alpha=TP+T_\alpha-T_\alpha P-RT_\alpha+RT_\alpha P+R^{**}T-R^{**}TP,
$$
we get that $S_\alpha(G)\subset V$, because $P(G)\subset G$, $(TP)(G)\subset Y\cap V^{\bot\bot}=\overline{V}^{weak}=\overline{V}=V$, $T_\alpha(G)\subset V$, $R(V)\subset V$, and $(R^{**}T)(G)\subset Y\cap V^{\bot\bot}=V$.

Moreover, in the proof of \cite[Lemma 1.2]{OP}, it is established that $S_\alpha^*y^*=T^*y^*$ for all $y^*\in F$ and $S_\alpha x^*=Tx^*$ for all $x^*\in X^*$ for which $Tx^*\in Y$. Since there is also shown that $\| S_\alpha-T_\alpha\|\to 0$, we infer that $\| S_\alpha\|\to\| T\|$ and $S_\alpha^*y^*\to T^*y^*$ for all $y^*\in Y^*$. In conclusion, $(S_\alpha)$ is the desired net, and the Main Lemma holds in the particular case when $X$ is of finite dimension.

\vskip\baselineskip
(c) Let us look at the general case. Since $T\in\mathcal{F}_{w^*}(X^*, Y^{**})$, we have $Z:={\rm ran}~T^*\subset X$. Denote the identity embedding by $j:Z\to X$ and let $S\in\mathcal{F}_{w^*}(Z^*, Y^{**})$ satisfy $T=Sj^*$. In fact, since $j^*$ is a restriction mapping, i.e., $j^*x^*=x^*|_Z$, $x^*\in X^*$, and $Z={\rm ran}~T^*$, one clearly may define $Sz^*=Tx^*$, $z^*\in Z^*$, where $x^*\in X^*$ is any functional satisfying $z^*=j^*x^*$. Applying Part (b) to $j^*(G)\subset Z^*$ and $S$, we obtain $(S_\alpha)\subset \mathcal{F}_{w^*}(Z^*,Y)$ satisfying $(S_\alpha j^*)(G)\subset V$ for all $\alpha$ such that $\| S_\alpha j^*\|=\| S_\alpha\|\to\| S\|=\| Sj^*\|=\| T\|$ (because $j^*(B_{X^*})=B_{Z^*}$), $S_\alpha^*y^*\to S^*y^*$ for all $y^*\in Y^*$ with $S_\alpha^*y^*= S^*y^*$ for $y^*\in F$. And, finally, $S_\alpha z^*=Sz^*$ for all $z^*\in Z^*$ for which $Sz^*\in Y$, meaning that $S_\alpha j^*x^*=Sj^*x^*=Tx^*$ for all $x^*\in X^*$ for which $Sj^*x^*=Tx^*\in Y$. Hence, $(T_\alpha):=(S_\alpha j^*)\subset\mathcal{F}_{w^*}(X^*,Y)$ is the desired net.
\begin{flushright}
$\square$
\end{flushright}

\section{Proofs of Theorems 1.3 and 1.2}

As was mentioned in the Introduction, Theorem 1.3 is a stronger form of \cite[Theorem 2.5]{OP}. The added strength makes the local reflexivity operator respect given subspaces.
Theorem 3.1 below is a net version of Theorem 1.3. {\it Theorem} 1.3 {\it immediately follows from Theorem} 3.1 by choosing $\alpha$ large enough to have $\bigl|\|\Ta\|-\|S\|\bigr|<\eps$,
and putting $T=\Ta$.

\begin{thm}
Let $X$ and $Y$ be Banach spaces, and let $U$ and $V$ be closed subspaces of $X$ and $Y$, respectively. Let $S\in \mathcal{F}(Y^*,X^*)$ satisfy $S(V^\bot)\subset U^\bot$. If $F$ is a finite-dimensional subspace of $Y^*$, then there exists a net $(T_\alpha)\subset\mathcal{F}(X,Y)$ satisfying $T_\alpha(U)\subset V$ for all $\alpha$ such that
\begin{itemize}
\item[${\rm 1^\circ}$]\ $\| T_\alpha\|\to\| S\|$
\item[${\rm 2^\circ}$]\ ${\rm ran}~T_\alpha^*={\rm ran}~S$\ {\it for\ all}\ $\alpha$
\item[${\rm 3^\circ}$]\ $T_\alpha^*y^*\to Sy^*$\ {\it for\ all}\ $y^*\in Y^*$,\ {\it in\ particular,}\ $T_\alpha^*y^*=Sy^*$\ {\it for\ all}\ $y^*\in F$,
\item[${\rm 4^\circ}$]\ $T_\alpha^{**}x^{**}=S^*x^{**}$\ {\it for\ all}\ $\alpha$\ {\it and\ for\ all\ those}\ $x^{**}\in X^{**}$\ {\it for\ which}\ $S^*x^{**}\in Y$.
\end{itemize}
Moreover, if $X=Y$ and $S$ is a projection, then also the operators $T_\alpha$ are projections.
\end{thm}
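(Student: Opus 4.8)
The plan is to obtain Theorem 3.1 by dualizing and invoking the Main Lemma. I would first fix a representation $S=\sumin\phi_i\otimes\xis$ with $\{\xis\}\subset\Xs$ a basis of $\ran S$ and $\phi_i\in\Yss$, so that the adjoint $T:=\Ss$ acts by $T\xss=\sumin\xss(\xis)\phi_i$, lies in $\fwsxssyss$ (its coefficient functionals $\xis$ come from the predual $\Xs$, so it is weak$^\ast$-to-weak continuous), satisfies $\|T\|=\|S\|$, and has preadjoint $T^\ast=S\colon\Ys\to\Xs$ in the sense of Lemma 2.1. I would then apply Lemma 2.1 with $\Xs$ and $Y$ as its two Banach spaces, so that its operator domain becomes $\Xss$ and its target bidual stays $\Yss$, taking $G=U^{\bot\bot}\subset\Xss$ and the given $V$ and $F$. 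This yields a net $(R_\alpha)\subset\fwsxssy$ with $R_\alpha(U^{\bot\bot})\subset V$, and the operators sought in Theorem 3.1 will be the restrictions $T_\alpha:=R_\alpha|_X\in\fxy$.

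The one hypothesis of Lemma 2.1 to check, $T(U^{\bot\bot})\subset V^{\bot\bot}$, is exactly the dual reading of $S(V^\bot)\subset U^\bot$: for $\xss\in U^{\bot\bot}$ and $\ys\in V^\bot$ one has $\langle T\xss,\ys\rangle=\langle\xss,S\ys\rangle=0$, since $S\ys\in U^\bot$ and $\xss$ annihilates $U^\bot$, so $T\xss\in(V^\bot)^\bot=V^{\bot\bot}$. The remaining conclusions then transfer routinely to $(T_\alpha)$. Goldstine's theorem and the weak$^\ast$-to-weak continuity of $R_\alpha$ give $\|T_\alpha\|=\|R_\alpha\|\to\|T\|=\|S\|$, which is ${\rm 1^\circ}$. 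The preadjoint satisfies $T_\alpha^\ast=R_\alpha^\ast$ on $\Ys$, so Lemma 2.1 ${\rm 2^\circ}$ yields $T_\alpha^\ast\ys\to S\ys$ for all $\ys\in\Ys$ with equality on $F$, which is ${\rm 3^\circ}$. Since $T_\alpha^{\ast\ast}=j_Y R_\alpha$ on $\Xss$, Lemma 2.1 ${\rm 3^\circ}$ together with $T=\Ss$ gives $T_\alpha^{\ast\ast}\xss=\Ss\xss$ for every $\xss$ with $\Ss\xss\in Y$, which is ${\rm 4^\circ}$; and $T_\alpha(U)=R_\alpha(j_X U)\subset R_\alpha(U^{\bot\bot})\subset V$.

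The genuinely delicate point, and the step I expect to be the main obstacle, is the range identity ${\rm 2^\circ}$, $\ran T_\alpha^\ast=\ran S$, which is not among the conclusions of Lemma 2.1 and must be extracted from its construction. I would first enlarge $F$ so that $S(F)=\ran S$; this is possible because $\ran S$ is finite-dimensional, one simply adjoins to $F$ functionals $f_i\in\Ys$ with $Sf_i=\xis$. Then $T_\alpha^\ast=S$ on $F$ forces $\ran T_\alpha^\ast\supset\ran S$. For the reverse inclusion I would use that $Z:=\ran T^\ast=\ran S$ is finite-dimensional, so that in the proof of Lemma 2.1 the net is produced (part (c)) by factoring $T=S'\iota^\ast$ through $Z$, where $\iota\colon Z\to\Xss$ is the inclusion, and putting $R_\alpha=S'_\alpha\iota^\ast$; consequently $\ran R_\alpha^\ast\subset\ran T^\ast=\ran S$, whence $\ran T_\alpha^\ast\subset\ran S$ and equality holds. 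This is the only place where the internal structure of the Main Lemma, rather than merely its statement, is needed.

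Finally, for the projection case $X=Y$ with $S$ idempotent in $\Xs$, I would additionally require $\ran S\subset F$ from the outset. Once ${\rm 2^\circ}$ is in force, every $z^\ast\in\ran T_\alpha^\ast=\ran S$ lies in $F$ and satisfies $T_\alpha^\ast z^\ast=Sz^\ast=z^\ast$, since $S$ is the identity on its own range; hence $T_\alpha^\ast$ is idempotent, and because $(T_\alpha^2)^\ast=(T_\alpha^\ast)^2=T_\alpha^\ast$ each $T_\alpha$ is itself a projection. This completes the scheme.
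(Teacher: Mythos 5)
Your proposal is correct and follows essentially the same route as the paper: both apply Lemma 2.1 to $S^*\in\mathcal{F}_{w^*}(X^{**},Y^{**})$ with $G=U^{\bot\bot}$ (after checking $S^*(U^{\bot\bot})\subset V^{\bot\bot}$ by the duality computation you give), restrict the resulting net to $X$, enlarge $F$ so that $S(F)=\mathrm{ran}\,S$ to get ${\rm 2^\circ}$, and enlarge $F$ to contain $\mathrm{ran}\,S$ for the projection case. The details you work out explicitly---the inclusion $\mathrm{ran}\,T_\alpha^*\subset\mathrm{ran}\,S$ via the factorization through $Z=\mathrm{ran}\,T^*$ in part (c) of the Main Lemma's proof, and the idempotency of $T_\alpha^*$ from ${\rm 2^\circ}$ and ${\rm 3^\circ}$---are precisely the steps the paper delegates to the proof of \cite[Theorem 2.5]{OP}.
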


\begin{proof}
As in the proof of \cite[Theorem 2.5]{OP}, we start by enlarging $F$, if necessary, so that $S(F)={\rm ran}~S$. Observing that $S^*\in\mathcal{F}_{w^*}(X^{**},Y^{**})$ and $S^*(U^{\bot\bot})\subset V^{\bot\bot}$, we apply Lemma 2.1. Let $(S_\alpha)\subset\mathcal{F}_{w^*}(X^{**},Y)$ be given by Lemma 2.1. Denote $T_\alpha=S_\alpha|_X$. By the proof of \cite[Theorem 2.5]{OP}, $(T_\alpha)$ satisfies conditions ${\rm 1^\circ}$--${\rm4^\circ}$. Since  $S_\alpha(U^{\bot\bot})\subset V$, we also have $T_\alpha(U)\subset V$ for all $\alpha$.

For the ``moreover" part, we start by enlarging $F$, if necessary, so that $F\supset {\rm ran}~S$. Applying what we have already proven, we get a net $(T_\alpha)\subset\mathcal{F}(X,X)$ as above. Using ${\rm 2^\circ}$ and ${\rm 3^\circ}$, it can be easily verified (see the proof of \cite[Theorem 2.5]{OP}) that $T_\alpha^2=T_\alpha$ for all $\alpha$.
\end{proof}

To prove Theorem 1.2, we shall first use Theorem 1.3 twice. Then we shall apply an enlarging argument which is inspired by the classical one, well-known from the proofs of the classical PLR (which is Theorem 1.1 in the Introduction); see, e.g., \cite{D}.

\vskip\baselineskip
{\it Proof of Theorem 1.2.} (a) We are given $S\in\mathcal{F}(X^{**},Y^{**})$ such that $S(U^{\bot\bot})\subset V^{\bot\bot}$. Applying Theorem 1.3 to $S$ yields $R\in\mathcal{F}(Y^*, X^*)$ such that $R(V^\bot)\subset U^\bot$, $\bigl| \| R\|-\| S\| \bigr|<\varepsilon/2$, and $R^*x^{**}=Sx^{**}$ for all $x^{**}\in E$. Applying Theorem 1.3 to $R$ yields $T\in\mathcal{F}(X,Y)$ such that $T(U)\subset V$, $\bigl| \| T\|-\| R\|\bigr|<\varepsilon/2$, $T^*y^*=Ry^*$ for all $y^*\in F$, and $T^{**}x^{**}=R^*x^{**}$ for those $x^{**}\in X^{**}$ for which $R^*x^{**}\in Y$. Hence, conditions ${\rm 1^\circ}$--${\rm 3^\circ}$ clearly hold.

In the special case, when $X=Y$ and $S$ is a projection, also  $R$ and $T$ can be chosen to be projections (see Theorem 1.3).

\vskip\baselineskip
(b) For the ``moreover" part, let us assume that $S|_E$ is one-to-one and denote $\sigma=\| (S|_E)^{-1}\|$. Choose $\delta>0$ such that $\sigma^{-1}-2\delta>0$ and  $(\sigma^{-1}-2\delta)^{-1}<\sigma+\varepsilon$.

Let $Se_1, \ldots , Se_n$, $e_i\in S_E$, be a $\delta$-net for $S(S_E)$. Find $f_i\in S_{Y^*}$ such that
$$
\| Se_i\|-\delta< (Se_i)(f_i),\quad i=1,\ldots , n.
$$
We may assume, by enlarging $F$, if necessary, that $F$ contains $f_i$, $i=1,\ldots , n$. Let $T\in \mathcal{F}(X,Y)$ be an operator given by Part (a).

Denote $t=T^{**}|_E$. We have to prove that $t$ is one-to-one and ${\rm 1^{\circ\circ}}$ holds, i.e., $\| t^{-1}\|<\sigma+\epsilon$.

Fix arbitrarily $e\in S_E$. Then, for all $i=1, \ldots , n$,
$$
\| te\|\geq |(T^{**}e)(f_i)|=|e(T^*f_i)|=|(Se)(f_i)|.
$$
Choosing $e_i\in S_E$ such that $\| Se-Se_i\|<\delta$, we get that $|(Se-Se_i)(f_i)|<\delta$. Therefore,
$$
|(Se)(f_i)|>|(Se_i)(f_i)|-\delta>\| Se_i\|-2\delta\geq\sigma^{-1}-2\delta,
$$
because $1=\| e_i\|=\|(S|_E)^{-1}Se_i\|\leq\sigma\| Se_i\|$. We see that
$$
\| te\|\geq\sigma^{-1}-2\delta\quad\forall e\in S_E.
$$
Hence, $t$ is an invertible operator from $E$ onto ${\rm ran}~t$, and
$$
\| t^{-1}\|\leq (\sigma^{-1}-2\delta)^{-1}<\sigma+\varepsilon.
$$

\begin{flushright}
$\square$
\end{flushright}

\section{Applications to Bounded Approximation Properties of Pairs}

Let $Y$ be a closed subspace of a Banach space $X$. Let $\lambda\geq 1$. The pair $(X,Y)$ is said to have the $\lambda$-{\it bounded approximation property} if for every finite-dimensional subspace $E$ of $X$ and for every  $\varepsilon > 0$ there exists $S \in \mathcal{F}(X):=\mathcal{F}(X,X)$ such that $S(Y) \subset Y$ and $\|S\| \leq \lambda + \varepsilon$, and $Sx=x$ for all $x \in E$. This concept was recently introduced and studied by Figiel, Johnson, and Pe\l czy\'{n}ski in the important paper \cite{FJP}. If $Y=X$ or $Y=\{0\}$, then the $\lambda$-bounded approximation property of the pair $(X,Y)$ is just the classical $\lambda$-bounded approximation property of $X$.

An important theorem due to Johnson \cite{J} (see, e.g., \cite[Proposition 3.5]{C}) asserts that {\it if
$X^{*}$ has the $\lambda$-bounded approximation property, then $X^{*}$ has the $\lambda$-bounded approximation property with conjugate operators.}
This means that the approximating operators $S\in\mathcal{F}(X^*)$ can be chosen to be conjugate.

Johnson's theorem easily follows from the PLR (see, e.g., \cite[Proposition 3.5]{C}), according to which finite-rank operators on a dual space are ``locally conjugate'' (see, e.g., \cite[Theorem 2.5]{OP}). From Theorem 1.3, we immediately get the following extension of Johnson's theorem which was recently established in \cite{OT} relying on the same basic idea, as in the present paper, to use Grothendieck's description of the dual space of the space of finite-rank operators as a space of integral operators.

\begin{cor}[Oja--Treialt]
Let $X$ be a Banach space and let $Y$ be a closed subspace of $X$. If the pair $(X^{*}, Y^{\perp})$ has the $\lambda$-bounded approximation property, then for every finite-dimensional subspace $F$ of $X^*$ and for every $\varepsilon > 0$ there exists $S\in\mathcal{F}(X)$ such that $S(Y)\subset Y$ and $\| S\|\leq\lambda+\varepsilon$, and $S^*x^*=x^*$ for all $x^*\in F$.
\end{cor}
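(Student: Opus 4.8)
The plan is to read Theorem 1.3 as a machine that converts a finite-rank operator on $X^*$ respecting $Y^\bot$ into a conjugate finite-rank operator on $X$ respecting $Y$, and to feed it the operator supplied by the $\lambda$-bounded approximation property of the pair $(X^*, Y^\bot)$. First I would apply the definition of the $\lambda$-bounded approximation property of $(X^*, Y^\bot)$ to the given finite-dimensional subspace $F\subset X^*$ and to tolerance $\varepsilon/2$. This produces an operator $A\in\mathcal{F}(X^*)$ with $A(Y^\bot)\subset Y^\bot$, $\|A\|\le\lambda+\varepsilon/2$, and $Ax^*=x^*$ for all $x^*\in F$.

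Next I would invoke Theorem 1.3 with the following dictionary: take both Banach spaces to be $X$, both closed subspaces to be $Y$, and let $A$ play the role of the operator ``$S$'' there. With these choices both annihilators $U^\bot$ and $V^\bot$ coincide with $Y^\bot\subset X^*$, so the structural hypothesis $S(V^\bot)\subset U^\bot$ of Theorem 1.3 reads exactly as $A(Y^\bot)\subset Y^\bot$ — precisely the inclusion secured in the first step. Taking the finite-dimensional subspace in Theorem 1.3 to be our $F$ and its tolerance to be $\varepsilon/2$, the theorem delivers $T\in\mathcal{F}(X)$ with $T(Y)\subset Y$, with $T^*x^*=Ax^*=x^*$ for all $x^*\in F$ (condition ${\rm 2^\circ}$), and with $\bigl|\|T\|-\|A\|\bigr|<\varepsilon/2$ (condition ${\rm 1^\circ}$).

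Setting $S:=T$ then finishes the argument: $S\in\mathcal{F}(X)$ maps $Y$ into $Y$, its adjoint fixes $F$, and $\|S\|<\|A\|+\varepsilon/2\le\lambda+\varepsilon$, as required. I do not expect a genuine obstacle here; the corollary is essentially a one-line consequence of Theorem 1.3 once the dictionary is in place. The only points that demand a moment's care are (i) verifying that the pair structure of $(X^*, Y^\bot)$ yields exactly the annihilator inclusion $A(Y^\bot)\subset Y^\bot$ that Theorem 1.3 consumes as its hypothesis, and (ii) the routine splitting of $\varepsilon$ so that the approximation estimate from the hypothesis and the norm estimate ${\rm 1^\circ}$ from Theorem 1.3 compound to the bound $\lambda+\varepsilon$.
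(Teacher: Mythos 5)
Your proposal is correct and is exactly the argument the paper intends: the paper derives this corollary as an immediate consequence of Theorem 1.3, applied with both spaces equal to $X$, both subspaces equal to $Y$, and the operator supplied by the $\lambda$-bounded approximation property of $(X^*,Y^\bot)$ playing the role of $S$, just as in your dictionary. The $\varepsilon/2$-splitting and the verification that the pair hypothesis yields $A(Y^\bot)\subset Y^\bot$ are handled exactly as needed.
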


 Applying standard arguments to the operators $S$ from Corollary 4.1, among others, by passing to convex combinations, it follows that there exists a net $(S_\alpha)\subset \mathcal{F}(X)$ such that $S_\alpha(Y)\subset Y$ and $\| S_\alpha\|\leq\lambda$ for all $\alpha$, and $S_\alpha\to I_X$ and $S^*_\alpha\to I_{X^*}$ pointwise. In particular, {\it the pair $(X,Y)$ has the $\lambda$-bounded approximation property.}

Using projections as approximating operators, it is natural to introduce the following concepts.

\begin{defn}
{\rm We say that  a pair $(X,Y)$, where $X$ is a Banach space and $Y$ its closed subspace,  has the $\pi_\lambda$-{\it property} if for every finite-dimensional subspace $E$ of $X$ and for every $\varepsilon>0$ there exists a projection $P\in\mathcal{F}(X)$ such that $P(Y)\subset Y$ and $\| P\|\leq\lambda+\varepsilon$, and $Px=x$ for all $x\in E$. We say that the pair $(X,Y)$ has the $\pi_\lambda$-{\it duality property} if for all finite-dimensional subspaces $E$ of $X$ and $F$ of $X^*$, and for every $\varepsilon>0$ there exists a projection $P\in \mathcal{F}(X)$ such that $P(Y)\subset Y$ and $\| P\|\leq\lambda+\varepsilon$, and $Px=x$ for all $x\in E$ and $P^*x^*=x^*$ for all $x^*\in F$.}
\end{defn}

Again, if $Y=X$ or $Y=\{0\}$, then the $\pi_\lambda$-property of the pair $(X,Y)$ is equivalent, through  the classical perturbation argument due to Johnson, Rosenthal, and Zippin (see \cite[Lemma 2.4]{JRZ} or, e.g., \cite[Lemma 3.2]{C}), to the $\pi_\lambda$-property of $X$. In this special case, the following result is essentially contained in \cite{JRZ}. It is immediate from Theorem 1.3.

\begin{cor}
Let $X$ be a Banach space and let $Y$ be a closed subspace of $X$. If the pair $(X^{*}, Y^{\perp})$ has the $\pi_\lambda$-property, then for every finite-dimensional subspace $F$ of $X^*$ and for every $\varepsilon>0$ there exists a projection $P\in\mathcal{F}(X)$ such that $P(Y)\subset Y$ and $\| P\|\leq\lambda+\varepsilon$, and $P^*x^*=x^*$ for all $x^*\in F$.
\end{cor}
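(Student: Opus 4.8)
The plan is to derive Corollary 4.2 directly from Theorem 1.3 by a clean application, exactly mirroring the way Corollary 4.1 follows. The hypothesis is that the pair $(X^*, Y^\bot)$ has the $\pi_\lambda$-property, so for every finite-dimensional subspace $F$ of $X^*$ and every $\eps>0$ there is a projection $\Qs\in\fxsxs$ with $\Qs(Y^\bot)\subset Y^\bot$, $\|\Qs\|\leq\lambda+\eps$, and $\Qs\xs=\xs$ for all $\xs\in F$. The natural first move is to produce, from this projection, a finite-rank \emph{conjugate} projection on $X^*$ respecting $Y^\bot$ which can then be dualized back to $X$ respecting $Y$.

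First I would set up Theorem 1.3 with the roles of the spaces chosen so that the subspace-respecting conclusion lands in the right place. Take both spaces to be $X$, let the subspace $U$ be $Y$, and let the subspace $V$ also be $Y$; take $S:=\Qs\in\fxsxs=\mathcal F((X)^*,(X)^*)$. The annihilator condition required by Theorem 1.3, namely $S(V^\bot)\subset U^\bot$, becomes $\Qs(Y^\bot)\subset Y^\bot$, which is exactly the hypothesis on the projection. With $F$ the given finite-dimensional subspace of $X^*$, Theorem 1.3 then yields $T\in\fxx$ with $T(Y)\subset Y$, $\bigl|\|T\|-\|\Qs\|\bigr|<\eps$ (so $\|T\|\leq\lambda+2\eps$, which after renaming $\eps$ gives $\|T\|\leq\lambda+\eps$), and $\Ts\ys=\Qs\ys$ for all $\ys\in F$. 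Since $\Qs$ fixes every element of $F$, condition ${\rm 2^\circ}$ of Theorem 1.3 gives $\Ts\xs=\Qs\xs=\xs$ for all $\xs\in F$, which is precisely the desired duality relation $P^*x^*=x^*$.

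The remaining point is that $T$ should be a \emph{projection}, and here I would invoke the ``special case'' clause of Theorem 1.3: when $X=Y$ (the two Banach spaces coincide) and $S$ is a projection, the operator $T$ produced is also a projection. Since $\Qs$ is a projection by hypothesis, Theorem 1.3 delivers $T=P$ as a projection in $\fxx$ satisfying all the stated norm and fixing conditions. Thus setting $P:=T$ completes the construction, giving a projection $P\in\fxx$ with $P(Y)\subset Y$, $\|P\|\leq\lambda+\eps$, and $\Ps\xs=\xs$ for all $\xs\in F$.

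I expect the main (and essentially only) obstacle to be verifying that the abstract annihilator hypothesis $S(V^\bot)\subset U^\bot$ correctly matches the concrete hypothesis, and confirming that the projection-preservation clause applies with $U=V=Y$ rather than merely $U=V=\{0\}$; once the dictionary between the pair $(X^*,Y^\bot)$ and the data of Theorem 1.3 is fixed, everything else is a direct read-off of conditions ${\rm 1^\circ}$, ${\rm 2^\circ}$, and the ``moreover'' clause, with only a harmless relabeling of $\eps$. No genuinely new estimate is needed; the corollary is immediate from Theorem 1.3, exactly as the text asserts.
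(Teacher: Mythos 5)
Your proposal is correct and is exactly the paper's argument: the paper states that this corollary ``is immediate from Theorem 1.3,'' and your instantiation (both spaces equal to $X$, $U=V=Y$, $S$ the projection on $X^*$ furnished by the $\pi_\lambda$-property of $(X^*,Y^\bot)$, followed by conditions ${\rm 1^\circ}$, ${\rm 2^\circ}$ and the projection clause, with a harmless rescaling of $\varepsilon$) is precisely that application spelled out. Your worry about the special-case clause is moot, since Theorem 1.3 (via Theorem 3.1, whose ``moreover'' part only uses ${\rm 2^\circ}$ and ${\rm 3^\circ}$ after enlarging $F$ to contain $\mathrm{ran}\, S$) imposes no restriction on $U$ and $V$ there.
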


Convex combinations of projections are far from being projections. Therefore, {\it if $(X^*, Y^\bot)$ has the $\pi_\lambda$-property}, standard arguments, applied to the projections from Corollary 4.3, only yield that $(X,Y)$ {\it has the $\lambda$-bounded approximation property}. Even in the classical case, it seems to be unknown whether the $\pi_\lambda$-property of $X^*$ implies or not the $\pi_\lambda$-property of $X$. However, by a famous result of Johnson, Rosenthal, and Zippin \cite[Corollary 4.8]{JRZ}, $X$ has the $\pi_\mu$-property for some $\mu$. And the best known $\mu$ seems to be $\mu=\lambda^2+2\lambda$ (see \cite[p. 618]{S}).
This result is contained in a special case of our following application of Theorem 1.3 (or, more precisely, its Corollaries 4.3 and 4.1).

\begin{thm}
Let $X$ be a Banach space and let $Y$ be a closed subspace of $X$.

{\rm (a)} If $(X^* ,Y^\bot)$ has the $\pi_\lambda$-property and $(X,Y)$ has the $\mu$-bounded approximation property, then the pair $(X,Y)$ has the $\pi_{\lambda\mu+\lambda+\mu}$-duality property.

{\rm (b)} If $(X,Y)$ has the $\pi_\lambda$-property and $(X^*, Y^\bot)$ has the $\mu$-bounded approximation property, then the pair $(X,Y)$ has the $\pi_{\lambda\mu+\lambda+\mu}$-duality property.
\end{thm}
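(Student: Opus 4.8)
The plan is to deduce both parts from Corollaries 4.1 and 4.3 together with the classical perturbation technique of \cite{JRZ} that turns approximating operators into projections. I would treat part (a) in detail and obtain part (b) by the dual construction. Fix finite-dimensional subspaces $E\subset X$ and $F\subset\Xs$, and $\varepsilon>0$. For part (a), since $(\Xs,Y^\bot)$ has the $\pi_\lambda$-property, Corollary 4.3 supplies a \emph{projection} $P\in\mathcal F(X)$ with $P(Y)\subset Y$, $\|P\|\le\lambda+\varepsilon$, and $\Ps\xs=\xs$ for all $\xs\in F$; and since $(X,Y)$ has the $\mu$-bounded approximation property, applied to $E$ it supplies $V\in\mathcal F(X)$ with $V(Y)\subset Y$, $\|V\|\le\mu+\varepsilon$, and $Vx=x$ for all $x\in E$. (For part (b) one instead takes the projection $P$ fixing $E$ given by the $\pi_\lambda$-property of $(X,Y)$, and the operator $W$ with $W^\ast$ fixing $F$ given by Corollary 4.1 applied to the $\mu$-bounded approximation property of $(\Xs,Y^\bot)$.)

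\textbf{Combination step.} Set $Q=P+(I-P)V$ (and, for part (b), $Q=P+W(I-P)$). Then $Q\in\mathcal F(X)$, and $Q(Y)\subset Y$ because $P(Y)\subset Y$ and $(I-P)V(Y)\subset(I-P)(Y)\subset Y$. For $x\in E$ we get $Qx=Px+(I-P)Vx=Px+(x-Px)=x$, so $Q$ fixes $E$; and since $\Qs=\Ps+V^\ast(I-\Ps)$, for $\xs\in F$ we get $\Qs\xs=\xs+V^\ast(\xs-\xs)=\xs$, so $\Qs$ fixes $F$. The norm is $\|Q\|\le\|P\|+\|I-P\|\,\|V\|\le(\lambda+\varepsilon)+(1+\lambda+\varepsilon)(\mu+\varepsilon)$, which for small $\varepsilon$ is $\le\lambda\mu+\lambda+\mu+\varepsilon'$. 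Thus $Q$ already has all the required features \emph{except} idempotency.

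\textbf{Projectionizing step.} Put $R=\ran Q$, a finite-dimensional space with $E\subset R$. If $Q|_R$ is invertible on $R$, then $\tilde Q:=(Q|_R)^{-1}Q$ is a projection with $\ran\tilde Q=R\supset E$ and $\kerr\tilde Q=\kerr Q$; hence $\tilde Q$ fixes $E$. Since $\ran\tilde Q^\ast=(\kerr Q)^\bot=\ran\Qs\supset F$ and $\tilde Q^\ast$ is again a projection, it fixes $F$. Moreover $Q$ carries the finite-dimensional space $R\cap Y$ into itself (as $Q(R)\subset R$ and $Q(Y)\subset Y$), hence bijectively, so $(Q|_R)^{-1}$ preserves $R\cap Y$ and $\tilde Q(Y)\subset Y$. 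Therefore $\tilde Q$ is a projection of the kind demanded by the $\pi_{\lambda\mu+\lambda+\mu}$-duality property, provided its norm is controlled.

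\textbf{The main obstacle} is exactly this last norm control: one needs $Q$ to be within $\varepsilon<1$ of the identity on $R$, for then $Q|_R$ is invertible with $\|(Q|_R)^{-1}\|\le(1-\varepsilon)^{-1}$ and $\|\tilde Q\|\le(1-\varepsilon)^{-1}\|Q\|\le\lambda\mu+\lambda+\mu+\varepsilon'$. The difficulty is that $R=\ran Q$ depends on $Q$, so one cannot simply enlarge $E$ in advance to force $V$, and hence $Q$, near the identity on $R$ — enlarging $E$ changes $V$, thus $Q$, thus $R$. This circularity is resolved by the classical perturbation argument of \cite[Lemma 2.4]{JRZ} (see also \cite[Lemma 3.2]{C}): run the above over a net of pairs $(E,F)$ increasing to contain ever finer finite nets of the balls of the relevant finite-dimensional spaces, and extract a pointwise (weak-operator) limit that is a genuine projection respecting $Y$ with the stated norm bound, as in the convex-combination argument used after Corollary 4.1. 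I expect this to be the only real work; it already appears in the classical case $Y=\{0\}$, $\mu=\lambda$, where it produces the known constant $\lambda^2+2\lambda=\lambda\mu+\lambda+\mu$.
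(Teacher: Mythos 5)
Your combination step is fine as far as it goes --- your $Q=P+(I-P)V$ is literally the paper's operator $Q=P+S-PS$ --- and your verifications that it fixes $E$, that $\Qs$ fixes $F$, that it respects $Y$, and that its norm is right are all correct. But the proof has a genuine gap exactly where you locate it, and the idea that closes it is missing from your proposal. The paper does not choose the two operators independently: in part (a) it first takes $S$ from the $\mu$-bounded approximation property (fixing $E$), \emph{then} enlarges $F$ so that $F\supset\ran \Ss$, and only then invokes Corollary 4.3 to get the projection $P$ with $\Ps\xs=\xs$ on this enlarged $F$. That ordering forces the absorption identity $\Ps\Ss=\Ss$, i.e.\ $SP=S$, and with it $Q=P+S-PS$ is \emph{exactly} idempotent by a two-line expansion of $Q^2$ (using $P^2=P$ and $SP=S$); no perturbation, no limits, no loss in the constant. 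Because your $P$ and $V$ are chosen independently, no relation such as $VP=V$ holds, your $Q$ is genuinely not a projection (easy $2\times2$ examples show this), and you are forced into the ``projectionizing'' detour. (For part (b) the dual ordering works: take $S$ from Corollary 4.1 first, then a projection $P$ fixing $E+\ran S$, so that $PS=S$ and $Q=P+S-SP$ is idempotent.)

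That detour cannot be completed as described. First, nothing makes $Q|_{\ran Q}$ invertible, let alone uniformly so: $\|Q\|$ is of order $\lambda\mu+\lambda+\mu$ and $Q$ has no reason to be near the identity on its own range, so $\|(Q|_R)^{-1}\|$, hence $\|\tilde Q\|$, is uncontrolled --- and this is not a technical nuisance but the whole difficulty, as you concede. Second, the limiting argument you invoke cannot repair it: to pass from weak-operator to pointwise convergence one takes convex combinations (Mazur), and, as the paper itself stresses immediately before Theorem 4.4, convex combinations of projections are far from being projections --- this is precisely why the standard net argument applied to Corollary 4.3 yields only the $\lambda$-bounded approximation property of $(X,Y)$, not its $\pi_\lambda$-property. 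Moreover, a pointwise limit of projections associated to an increasing net of pairs $(E,F)$ would converge to $I_X$, not to the finite-rank projection that the $\pi$-duality property demands for each fixed $(E,F,\eps)$. Finally, \cite[Lemma 2.4]{JRZ} upgrades an operator that \emph{almost} fixes a finite-dimensional subspace to one that exactly fixes it; it neither restores idempotency nor bounds $\|(Q|_R)^{-1}\|$, and for pairs it would additionally have to respect $Y$, which is not automatic. So the argument stands or falls with producing, for each fixed $(E,F,\eps)$, one genuine projection of norm at most $\lambda\mu+\lambda+\mu+\eps$, and that is achieved only through the algebraic identity $SP=S$ which your independent choice of the two operators forecloses.
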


\begin{proof}
Our proof will refine some ideas from \cite{JRZ} and \cite[proof of Theorem 5.6]{C}.

Let $E$ and $F$ be finite-dimensional subspaces of $X$ and $X^*$, respectively, and let $\varepsilon>0$. Choose $\delta>0$ such that $(2+\lambda+\mu)\delta+\delta^2<\varepsilon$.

\vskip\baselineskip
(a) The $\mu$-bounded approximation property of $(X,Y)$ gives us $S\in\mathcal{F}(X)$ such that $S(Y)\subset Y$ and $\| S\|\leq\mu+\delta$, and $Sx=x$ for all $x\in E$. Enlarging $F$, if necessary, we may assume that $F={\rm ran}~  S^*$. By Corollary 4.3, there is a projection $P\in \mathcal{F}(X)$ such that $P(Y)\subset Y$ and $\| P\|\leq\lambda+\delta$, and $P^*x^*=x^*$ for all $x^*\in F={\rm ran}~S^*$. Hence, $P^*S^*x^*=S^*x^*$ for all $x^*\in X^*$, i.e., $P^*S^*=S^*$, meaning that $SP=S$. Then, clearly, $Q:=P+S-PS\in\mathcal{F}(X)$ is a projection, $Q(Y)\subset Y$, $\| Q\|\leq\lambda\mu+\lambda+\mu+\varepsilon$, $Qx=Px+x-x=x$ for all $x\in E$, and $Q^*x^*=x^*+S^*x^*-S^*x^*=x^*$ for all $x^*\in F$.

\vskip\baselineskip
(b) The $\pi_\lambda$-property of $(X,Y)$ gives us a projection $P\in\mathcal{F}(X)$ such that $P(Y)\subset Y$, $\| P\|\leq\lambda+\delta$, and $Px=x$ for all $x\in E$. Enlarging $F$, if necessary, we may assume that $F={\rm ran}~  P^*$. By Corollary 4.1, there is $S\in\mathcal{F}(X)$ such that $S(Y)\subset Y$, $\| S\|\leq\mu+\delta$, and $S^*x^* =x^*$ for all $x^*\in F={\rm ran}~P^*$. Similarly to (a), we get that $PS=P$. Hence, $Q:=P+S-SP\in\mathcal{F}(X)$ is a needed projection.
\end{proof}

An impact of the Radon--Nikod\'{y}m property can improve Theorem 4.4 as follows. This is clear, because under the assumptions below, by \cite[Corollary 5.12]{LiO}, the pair $(X^*, Y^\bot)$ has the metric (i.e., 1-bounded) approximation property whenever $(X^*, Y^\bot)$ has the {\it approximation property}. The latter property means that, in the definition of the $\lambda$-bounded approximation property of a pair, one does not put any restriction on the norms of the approximating operators $S$ (i.e., one deletes ``and $\| S\|\leq\lambda+\varepsilon$").

\begin{cor}
Let $X$ be a Banach space and let $Y$ be a closed subspace of $X$. Let $X^*$ or $X^{**}$ have the Radon--Nikod\'{y}m property.

{\rm (a)} If $(X^* ,Y^\bot)$ has the $\pi_\lambda$-property, then the pair $(X,Y)$ has the $\pi_{2\lambda+1}$-duality property.

{\rm (b)} If $(X,Y)$ has the $\pi_\lambda$-property and $(X^*, Y^\bot)$ has the approximation property, then the pair $(X,Y)$ has the $\pi_{2\lambda+1}$-duality property.
\end{cor}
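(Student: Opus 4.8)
The plan is to deduce both parts directly from Theorem 4.4, using the Radon--Nikod\'ym hypothesis solely to pin the relevant bounded approximation constant down to $1$. The only new ingredient is the fact recalled in the paragraph preceding the corollary (from \cite[Corollary 5.12]{LiO}): under the assumption that $X^*$ or $X^{**}$ has the Radon--Nikod\'ym property, the pair $(X^*, Y^\bot)$ has the metric (i.e.\ $1$-bounded) approximation property as soon as it has the plain approximation property. Once this upgrade is available, everything reduces to substituting the optimal constant $\mu=1$ into the bound $\lambda\mu+\lambda+\mu$ supplied by Theorem 4.4 and simplifying to $2\lambda+1$.

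For part (a), I would first note that the $\pi_\lambda$-property of $(X^*, Y^\bot)$ entails the $\lambda$-bounded approximation property of $(X^*, Y^\bot)$, hence its approximation property. By the quoted consequence of the Radon--Nikod\'ym property, $(X^*, Y^\bot)$ then has the metric approximation property. Applying Corollary 4.1 with constant $1$ and passing to convex combinations exactly as in the remark following that corollary, the pair $(X,Y)$ acquires the $1$-bounded approximation property. Now Theorem 4.4(a) applies with the given $\lambda$ and with $\mu=1$, and its conclusion is precisely the $\pi_{\lambda\cdot 1+\lambda+1}$-duality property, that is, the $\pi_{2\lambda+1}$-duality property.

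For part (b) the route is even shorter, since here $(X,Y)$ is assumed to have the $\pi_\lambda$-property outright and $(X^*, Y^\bot)$ is assumed to have the approximation property. Again invoking the Radon--Nikod\'ym hypothesis, $(X^*, Y^\bot)$ in fact has the metric approximation property, hence the $\mu$-bounded approximation property with $\mu=1$. Thus Theorem 4.4(b) applies with the given $\lambda$ and $\mu=1$, which yields the $\pi_{\lambda+\lambda+1}$-duality property, i.e.\ the $\pi_{2\lambda+1}$-duality property.

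No genuine analytic obstacle remains once the Radon--Nikod\'ym upgrade from the approximation property to the metric approximation property is in hand; the computation of constants in Theorem 4.4 is entirely routine. The one point that deserves care is checking that the hypotheses of \cite[Corollary 5.12]{LiO} are actually satisfied for the pair $(X^*, Y^\bot)$ under ``$X^*$ or $X^{**}$ has the Radon--Nikod\'ym property''---and in particular that the approximation property of $(X^*, Y^\bot)$ is genuinely present in each case, which it is, coming for free from the $\pi_\lambda$-property in part (a) and being an explicit assumption in part (b).
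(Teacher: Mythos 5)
Your proposal is correct and takes essentially the same route as the paper, whose entire proof is the terse remark preceding the corollary: under the Radon--Nikod\'ym hypothesis, \cite[Corollary 5.12]{LiO} upgrades the approximation property of $(X^*,Y^\bot)$ to the metric approximation property, after which Theorem 4.4 with $\mu=1$ gives the constant $2\lambda+1$. The one detail you spell out that the paper leaves implicit---passing in part (a) from the metric approximation property of $(X^*,Y^\bot)$ to the $1$-bounded approximation property of $(X,Y)$ via Corollary 4.1 and convex combinations---is exactly the content of the remark following Corollary 4.1, so your filling-in is faithful to the intended argument.
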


Is the assumption about the Radon--Nikod\'{y}m property in Corollary 4.5 essential or not? This question is closely connected with the following famous open problem (see, e.g., \cite[Problem 3.8]{C}; for an overview around the problem, see \cite[Section 3]{O2}). Does the approximation property of the dual space $X^*$ of an arbitrary Banach space $X$ imply the metric approximation property?

Finally, let us look at
the situation when $X^{**}$ or, more generally, the pair $(X^{**}, Y^{\bot\bot})$ has the $\pi_\lambda$-property. Here, applying Theorem 4.4 (a) twice, we would get that $(X,Y)$ has the $\pi_\mu$-property with (a rather big) $\mu=(\lambda^2+2\lambda)^2+2(\lambda^2+2\lambda)$. However, in this case, the situation turns out to be perfect: $\mu=\lambda$, as the following result shows. Its
special case with $Y=X$ (or $Y=\{0\}$) is also due to Johnson, Rosenthal,
and Zippin \cite[Corollary 3.4]{JRZ}, and Theorem 1.2  enables us to give an easy proof of it.

\begin{cor}
Let $X$ be Banach space and let $Y$ be a closed subspace of $X$. If the pair  $(X^{**}, Y^{\bot\bot})$ has the $\pi_\lambda$-property, then also  the pair $(X,Y)$ has the $\pi_\lambda$-property.
\end{cor}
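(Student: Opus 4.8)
The plan is to apply Theorem 1.2 in the self-dual situation, taking both Banach spaces to be $X$ and both subspaces to be $Y$, and feeding it a projection on $X^{**}$ supplied by the hypothesis on the pair $(X^{**}, Y^{\bot\bot})$. The point is that Theorem 1.2 transports a subspace-respecting projection downstairs to $X$ \emph{as a projection}, which is precisely what keeps the constant equal to $\lambda$ rather than blowing up to $(\lambda^2+2\lambda)^2+2(\lambda^2+2\lambda)$ as noted before the statement.

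First I would fix a finite-dimensional subspace $E$ of $X$ and an $\varepsilon>0$, and set $\varepsilon'=\varepsilon/2$. Regarding $E$ as a finite-dimensional subspace of $X^{**}$ via the canonical embedding, I would invoke the $\pi_\lambda$-property of $(X^{**}, Y^{\bot\bot})$ to obtain a projection $S\in\mathcal{F}(X^{**})$ such that $S(Y^{\bot\bot})\subset Y^{\bot\bot}$, $\|S\|\leq\lambda+\varepsilon'$, and $Sx^{**}=x^{**}$ for all $x^{**}\in E$.

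Next I would apply Theorem 1.2 with both spaces equal to $X$ and both subspaces equal to $Y$ (so that $U^{\bot\bot}=V^{\bot\bot}=Y^{\bot\bot}$), with the operator $S$ just constructed, with the same $E$, with $F=\{0\}$, and tolerance $\varepsilon'$. The requirement $S(U^{\bot\bot})\subset V^{\bot\bot}$ holds by construction, so Theorem 1.2 produces $T\in\mathcal{F}(X)$ with $T(Y)\subset Y$ and $\bigl|\,\|T\|-\|S\|\,\bigr|<\varepsilon'$, whence $\|T\|<\|S\|+\varepsilon'\leq\lambda+2\varepsilon'=\lambda+\varepsilon$. Since $S$ is a projection and the two spaces coincide, the special case of Theorem 1.2 guarantees that $T$ too is a projection. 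To check that $T$ fixes $E$, note that for $x\in E\subset X$ we have $Sx=x\in Y_0=X$, so condition ${\rm 3^\circ}$ of Theorem 1.2 applies and yields $T^{**}x=Sx=x$; as $x\in X$ this reads $Tx=x$. Setting $P:=T$ then gives a projection in $\mathcal{F}(X)$ with $P(Y)\subset Y$, $\|P\|\leq\lambda+\varepsilon$, and $Px=x$ for all $x\in E$, which is exactly the $\pi_\lambda$-property of $(X,Y)$.

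I expect the only delicate point to be the bookkeeping around condition ${\rm 3^\circ}$: that assertion fixes $x^{**}$ only where $Sx^{**}$ lands back inside the base space, so the argument relies on having chosen $S$ via the upstairs $\pi_\lambda$-property to fix the entire $E$ pointwise (and on $E$ sitting inside the canonical image of $X$), which makes the condition $Sx^{**}\in Y_0=X$ automatic on $E$. Everything else is routine, and no use of the ``moreover'' (one-to-one) part of Theorem 1.2 is needed.
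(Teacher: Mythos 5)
Your proposal is correct and follows essentially the same route as the paper's own proof: lift the finite-dimensional subspace $E$ into $X^{**}$, take the subspace-respecting projection there, and push it down via Theorem~1.2 (using its projection clause and condition ${\rm 3^\circ}$, which applies because $Sx=x\in X$ for $x\in E$). The only blemish is the undefined symbol $Y_0$ (you clearly mean the codomain space of Theorem~1.2, which is $X$ in this application); otherwise the norm bookkeeping and the verification that $T$ fixes $E$ match the paper exactly.
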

\begin{proof}
Let $E$ be a finite-dimensional subspace of $X$ and let $\eps>0$.
Since $(X^{**}, Y^{\bot\bot})$ has the $\pi_\lambda$-property, there exists a projection $P\in\mathcal{F}(X^{**})$ such that $P(Y^{\bot\bot})\subset Y^{\bot\bot}$,
$\|P\|\leq\lambda+\eps/2$, and $Px=x$ for all $x\in E$.
By Theorem 1.2, there is a projection $Q\in\mathcal{F}(X)$ such that $Q(Y)\subset Y$,  $\|Q\|\leq\lambda+\eps$,
and $Qx=\Qss x=Px=x$ for all $x\in E$ (note that then $Px=x\in X$).
\end{proof}


\begin{thebibliography}{HWW}

\bibitem[A]{A}
{\sc S.~Axler,}
\newblock {\em Linear Algebra Done Right,}
\newblock Undergraduate Texts in Mathematics. Springer, New York, 1996.

\bibitem[B1]{B1}
{\sc E. Behrends,}
\newblock {\em A generalization of the principle of local reflexivity,}
\newblock Rev. Roumaine Math. Pures Appl. {\bf31} (1986), 293--296.

\bibitem[B2]{B2}
{\sc E. Behrends,}
\newblock {\em On the principle of local reflexivity,}
\newblock Studia Math. {\bf100} (1991), 109--128.

\bibitem[Be]{Be}
{\sc S.F. Bellenot,}
\newblock {\em Local reflexivity of normed spaces, operators, and Fr\'{e}chet spaces,}
\newblock J. Funct. Anal. {\bf 59} (1984), 1--11.

\bibitem[C]{C}
{\sc P.G.~Casazza,}
\newblock {\em Approximation properties,}
\newblock In: W.B.~Johnson and J.~Lindenstrauss (eds.),
Handbook of the Geometry of Banach Spaces. Volume 1, 271--316, Elsevier, Amsterdam, 2001.

\bibitem[D]{D}
{\sc D.W. Dean,}
\newblock {\em The equation $L(E,X^{\ast\ast})=L(E,X)^{\ast\ast}$ and the principle of local reflexivity,}
\newblock Proc. Amer. Math. Soc. {\bf40} (1973), 146--148.

\bibitem[DU]{DU}
{\sc J.~Diestel and J.J.~Uhl,}
\newblock {\em Vector Measures,}
\newblock Mathematical Surveys 15. American Mathematical Society, Providence, Rhode Island, 1977.

\bibitem[FJP]{FJP}
{\sc T. Figiel, W.B. Johnson, and A. Pe\l czy\'{n}ski,}
\newblock{\em Some approximation properties of Banach spaces and Banach lattices,}
\newblock Israel J. Math. (2011), 199--231.

\bibitem[G]{G}
{\sc  A.~Grothendieck,}
\newblock {\em Produits tensoriels topologiques et espaces nucl\'eaires,}
\newblock Mem. Amer. Math. Soc. {\bf 16} (1955).

\bibitem[J]{J}
{\sc W.B.~Johnson,}
\newblock {\em On the existence of strongly series summable Markuschevich bases in Banach spaces,}
\newblock Trans. Amer. Math. Soc. {\bf 157} (1971) 481--486.

\bibitem[JL]{JL}
{\sc W.B.~Johnson and J.~Lindenstrauss,}
\newblock {\em Basic concepts in the geometry of Banach spaces,}
\newblock In: W.B.~Johnson and J.~Lindenstrauss (eds.),
Handbook of the Geometry of Banach Spaces. Volume 1, 1--84, Elsevier, Amsterdam, 2001.

\bibitem[JRZ]{JRZ}
{\sc W.B. Johnson, H.P. Rosenthal, and M. Zippin,}
\newblock {\em On bases, finite dimensional decompositions and weaker structures in Banach spaces,}
\newblock Israel J. Math. {\bf9} (1971), 488--506.

\bibitem[LO]{LO}
{\sc \AA.~Lima and E.~Oja,}
\newblock {\em Metric approximation properties and trace mappings,}
\newblock Math. Nachr. {\bf 280} (2007), 571--580.

\bibitem[LR]{LR}
{\sc J.~Lindenstrauss and H.P.~Rosenthal,}
\newblock {\em The $\mathcal L_p$ spaces,}
\newblock Israel J. Math. {\bf7} (1969), 325--349.

\bibitem[LiO]{LiO}
{\sc A. Lissitsin and E. Oja,}
\newblock{\em The convex approximation property of Banach spaces,}
\newblock J. Math. Anal. Appl. {\bf 379} (2011), 616--626.

\bibitem[O1]{O1}
{\sc E. Oja,}
\newblock{\em Lifting bounded approximation properties from Banach spaces to their dual spaces,}
\newblock J. Math. Anal. Appl. {\bf 323} (2006), 666--679.

\bibitem[O2]{O2}
{\sc E. Oja,}
\newblock{\em On bounded approximation properties of Banach spaces,}
\newblock Banach Center Publ. {\bf 91} (2010), 219--231.

\bibitem[O3]{O3}
{\sc E.~Oja,}
\newblock {\em Inner and outer inequalities with applications to approximation properties,}
\newblock Trans. Amer. Math. Soc. {\bf 363} (2011) 5827--5846.

\bibitem[OP]{OP}
{\sc E. Oja and M. P\~{o}ldvere,}
\newblock {\em Principle of local reflexivity revisited,}
\newblock Proc. Amer. Math. Soc. {\bf 135} (2007), 1081--1088.

\bibitem[OT]{OT}
{\sc E. Oja and S. Treialt,}
\newblock{\em Some duality results on bounded approximation properties of pairs,}
\newblock Studia Math. {\bf 217} (2013), 79--94.

\bibitem[R]{R}
{\sc R.A.~Ryan,}
\newblock {\em Introduction to Tensor Products of Banach Spaces,}
\newblock Springer, London, 2002.

\bibitem[S]{S}
{\sc I. Singer,}
\newblock {\em Bases in Banach Spaces II,}
\newblock Editura Acad. R. S. Rom\^{a}nia, Springer, Bucharest, 1981.

\end{thebibliography}
\end{document}